\theoremstyle{plain}
\newtheorem{thm}{Theorem}
\theoremstyle{definition}
\theoremstyle{rem}
\newtheorem{rem}{Rem}
\newcommand{\prn}[1]{\left(#1\right)}
\newcommand{\brk}[1]{\left[#1\right]}
\newcommand{\ud}[1]{\,\mathrm{d}#1}
\newcommand{\vv}{\mathrm{v}}
\newcommand{\vS}{\mathrm{v}_\text{S}} 
\begin{document}
\parskip.9ex

\title[Constructing Set-Valued Fundamental Diagrams from Jamiton Solutions]
{Constructing Set-Valued Fundamental Diagrams from Jamiton Solutions in Second Order Traffic Models}
\author[B. Seibold]{Benjamin Seibold}
\address[Benjamin Seibold]
{Department of Mathematics \\ Temple University \\ \newline
1805 North Broad Street \\ Philadelphia, PA 19122}
\email{seibold@temple.edu}
\urladdr{http://www.math.temple.edu/\~{}seibold}
\author[M. R. Flynn]{Morris R. Flynn}
\address[Morris R. Flynn]
{Department of Mechanical Engineering \\ University of Alberta \\ \newline
Edmonton, AB, T6G 2G8 Canada }
\email{mrflynn@ualberta.ca}
\urladdr{http://websrv.mece.ualberta.ca/mrflynn}
\author[A. R. Kasimov]{Aslan R. Kasimov}
\address[Aslan R. Kasimov]
{4700 King Abdullah University of Science and Technology \\ \newline
Thuwal 23955-6900, Kingdom of Saudi Arabia }
\email{aslan.kasimov@kaust.edu.sa}
\urladdr{http://web.kaust.edu.sa/faculty/aslankasimov}
\author[R. R. Rosales]{Rodolfo Ruben Rosales}
\address[Rodolfo Ruben Rosales]
{Department of Mathematics \\ Massachusetts Institute of Technology \\ \newline
77 Massachusetts Avenue \\ Cambridge, MA 02139}
\email{rrr@math.mit.edu}
\subjclass[2000]{35B40; 35C07; 35L65; 35Q91; 76L05; 91C99}


\keywords{second order, traffic model, Payne-Whitham, Aw-Rascle-Zhang, jamiton, traveling wave, fundamental diagram, sensor data, effective flow rate}
\begin{abstract}
Fundamental diagrams of vehicular traffic flow are generally multi-valued in the congested flow regime. We show that such set-valued fundamental diagrams can be constructed systematically from simple second order macroscopic traffic models, such as the classical Payne-Whitham model or the inhomogeneous Aw-Rascle-Zhang model. These second order models possess nonlinear traveling wave solutions, called jamitons, and the multi-valued parts in the fundamental diagram correspond precisely to jamiton-dominated solutions. This study shows that transitions from function-valued to set-valued parts in a fundamental diagram arise naturally in well-known second order models. As a particular consequence, these models intrinsically reproduce traffic phases.
\end{abstract}

\maketitle

\section{Introduction}
\label{sec:introduction}
In this paper a connection between set-valued fundamental diagrams of traffic flow and traveling wave solutions in second order inviscid traffic models is presented.

\subsection{Types of Traffic Models}
A wide variety of mathematical models for vehicular traffic exist. Microscopic models (e.g., \cite{Pipes1953, Newell1961}) describe the individual vehicles and their interactions by ordinary differential equations. Mesoscopic models (e.g., \cite{HermanPrigogine1971, Phillips1979, KlarWegener2000, IllnerKlarMaterne2003}) employ a statistical mechanics perspective in which vehicle interactions are modeled in analogy to kinetic theory. Macroscopic models (e.g., \cite{LighthillWhitham1955, Richards1956, Underwood1961, Payne1971, Payne1979, Lebacque1993, KernerKonhauser1993, KernerKonhauser1994, AwRascle2000}, as well as the discussion below) describe traffic flow using suitable adaptations of the methods of continuum mechanics \cite{Whitham1974}, which yield equations mathematically similar to those from fluid dynamics. There also exist other types of models that are not necessarily based on differential equations, such as probabilistic models (e.g., \cite{AlperovichSopasakis2008}), and cellular models (e.g., \cite{NagelSchreckenberg1992}), which divide the road into segments and prescribe laws for the propagation of vehicles between cells. A detailed overview of the various types of models is given in \cite{Helbing2001}.

All of these types of traffic models are related. For instance, mesoscopic models can be derived from cellular models \cite{NagelSchreckenberg1992} with stochastic components \cite{AlperovichSopasakis2008}; macroscopic models can then be interpreted as limits of mesoscopic models \cite{NelsonSopasakis1999, KlarWegener2000}; cellular models can be interpreted as discretizations (``cell transmission models'') of macroscopic models \cite{Daganzo1994, Daganzo1995_2} and thus relate to kinematic waves \cite{Daganzo2006}; and microscopic models are equivalent to discretizations of macroscopic models in Lagrangian variables. Through these relations, the macroscopic modeling results presented in this work may have consequences for various other types of traffic models.

In this paper, we focus solely on inviscid (aggregated) single-lane macroscopic models for traffic flow on unidirectional roads. These types of models were started with the work of Lighthill \& Whitham \cite{LighthillWhitham1955} and Richards \cite{Richards1956}. The Lighthill-Whitham-Richards (LWR) model describes the temporal evolution of the vehicular density along the road by a scalar hyperbolic conservation law, and is therefore denoted a ``first order model''. The first ``second order model'', due to Payne \cite{Payne1971} and Whitham \cite{Whitham1974}, introduces the vehicular velocity field as an independent variable. As a reaction to certain worrisome mathematical properties of the PW model \cite{Daganzo1995}, Aw \& Rascle \cite{AwRascle2000}, and independently Zhang \cite{Zhang2002} devised a different second order model, which has subsequently been extended and generalized \cite{Greenberg2001, BerthelinDegondDelitalaRascle2008}.

In all of these models, traffic is described by inviscid conservation or balance laws. There also exist macroscopic models with higher spatial derivatives: KdV-type models (e.g., \cite{KurtzeHong1995, KomatsuSasa1995}) involve a dispersive term, and viscous models (e.g., \cite{KernerKonhauser1993, KernerKonhauser1994}) possess a viscosity term. These two classes of models explicitly describe the dynamics of the traffic flow on multiple length scales and avoid discontinuous solutions. In contrast, inviscid models replace thin transition zones (e.g., upstream ends of traffic jams) by shocks, and the specific dynamics inside the transition zone are replaced by appropriate Rankine-Hugoniot jump conditions \cite{Evans1998}.

\subsection{Fundamental Diagram}
\label{subsec:fundamental_diagram}
The fundamental diagram (FD) of traffic flow is the plot of the flow rate (i.e., the number of vehicles passing a specific position on the road per unit time) versus the vehicle density (i.e., the average number of vehicles per unit length). The first FD measurements were done in the 1930s by Greenshields \cite{Greenshields1935}; they led him to propose a quadratic function $Q(\rho) = \rho\,u_\text{max}(1-\rho/\rho_\text{max})$ describing the relation between flow rate and density. Here, $u_\text{max}$ is the velocity that a driver would assume when alone on the road, and $\rho_\text{max}$ is a maximum density at which the traffic flow stagnates (bumper-to-bumper plus a safety distance). This form of the FD was later also used by Richards \cite{Richards1956}. However, subsequent measurements, such as those of Wardrop \& Charlesworth \cite{WardropCharlesworth1954} and H. Greenberg \cite{Greenberg1959}, revealed that a quadratic form is not an accurate description of true FD data, since, for instance, the maximum flow rate generally occurs at densities significantly below $\frac{1}{2}\rho_\text{max}$. In reaction, different types of FD functions were proposed, such as by Lighthill \& Whitham \cite{LighthillWhitham1955}, Greenberg \cite{Greenberg1959}, Underwood \cite{Underwood1961}, Newell \cite{Newell1993}, Daganzo \cite{Daganzo1994}, and Wang \& Papageorgiou \cite{WangPapageorgiou2005}.

However, a more elementary shortcoming of function-based FD models is that true flow rate vs.\ density measurements tend to exhibit a clear systematic spread: for sufficiently large densities, one value of $\rho$ corresponds to multiple values of $Q$. The described ``spreading'' effect can be seen in Fig.~\ref{fig:fd_data}, which displays a plot of flow rate vs.\ density, obtained from single loop sensor measurements at a fixed position on a highway. The data is taken from the RTMC data set \cite{TrafficMnDOT} of 2003, provided by the Minnesota Department of Transportation (Mn/DOT), from a sensor on the southbound direction of I-35W (Minneapolis, MN). Each point in the diagram corresponds to a time interval of $\Delta t = 30\text{s}$. The recorded flow rate equals the number of vehicles that have passed over the sensor during the time interval (the selection of discrete flow rate values is due to the integer nature of the number of vehicles); and the density is obtained from the fraction of time that a vehicle has blocked the sensor. For the plot in Fig.~\ref{fig:fd_data}, we assume that the average vehicle length is $5\text{m}$, and that the possible maximum density is achieved at $1/7.5\text{m}$, i.e., a safety distance of half a vehicle length is the absolute minimum that drivers will accept.

\begin{figure}
\begin{minipage}[b]{.485\textwidth}
\includegraphics[width=\textwidth]{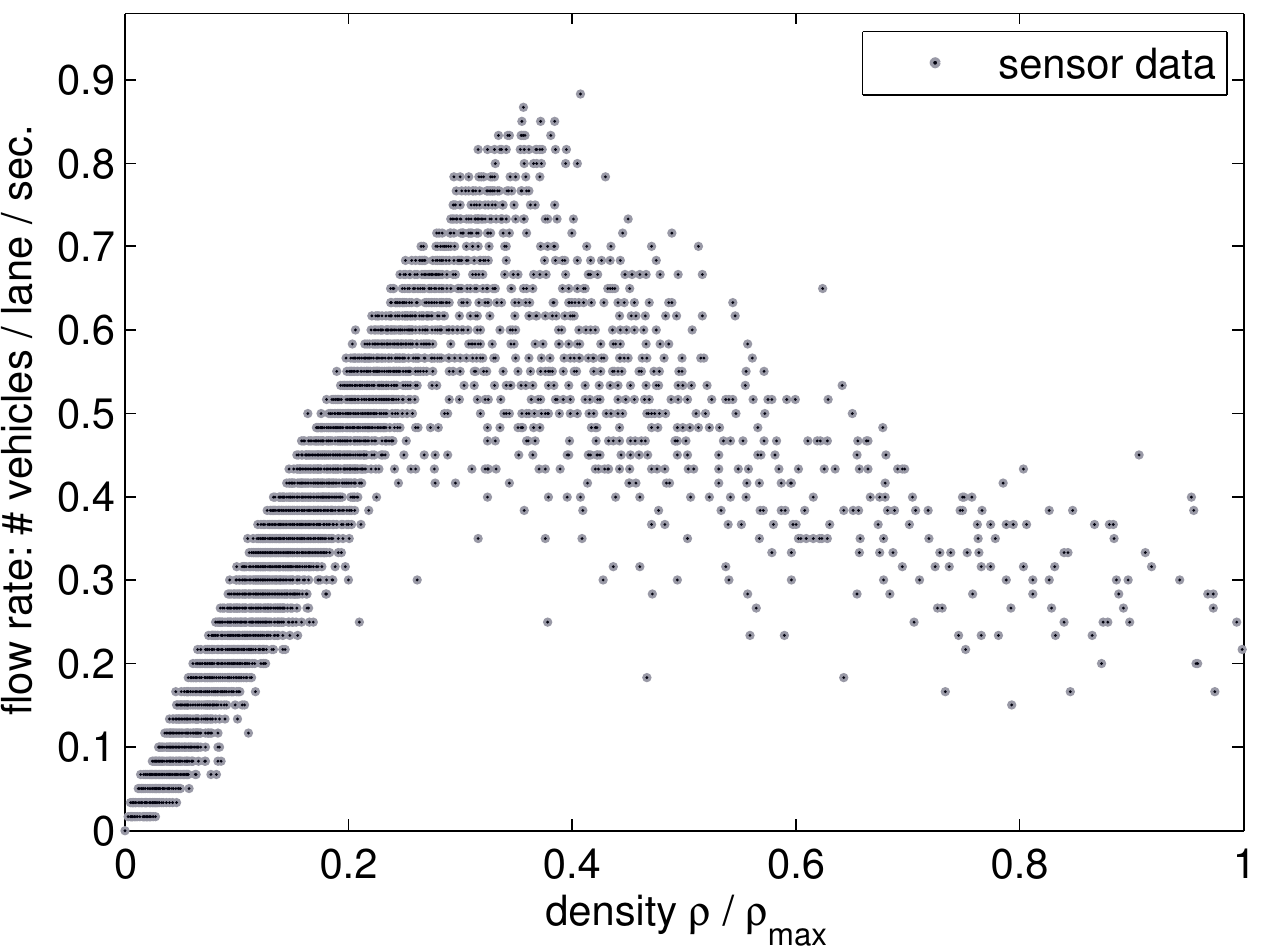}
\vspace{-1.8em}
\caption{Fundamental diagram, obtained from sensor measurement data, aggregated over time intervals of $\Delta t = 30\text{s}$. The horizontal structures are due to the integer nature of the flow rate data.}
\label{fig:fd_data}
\end{minipage}
\hfill
\begin{minipage}[b]{.485\textwidth}
\includegraphics[width=\textwidth]{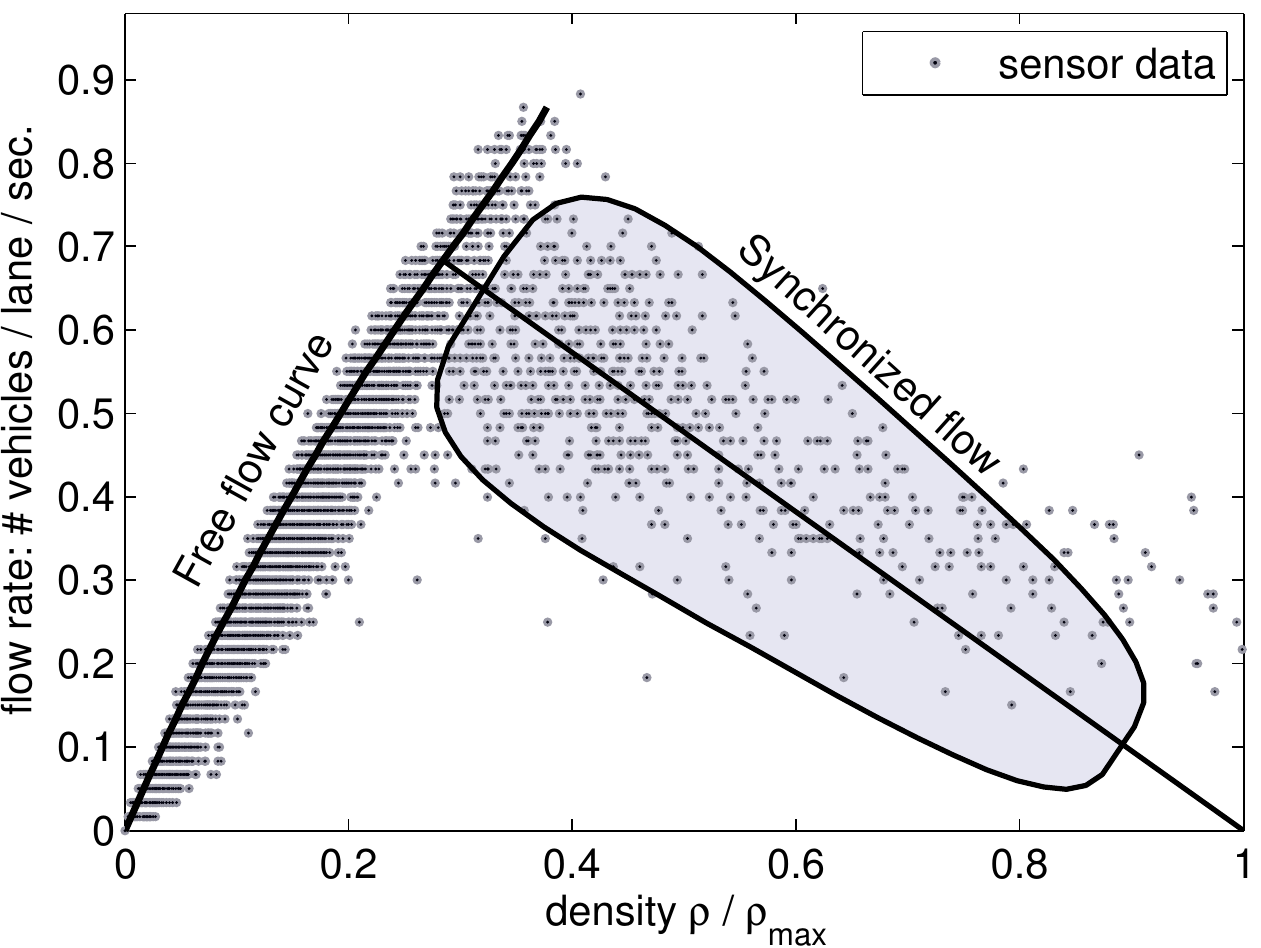}
\vspace{-1.8em}
\caption{Fundamental phase diagram, as frequently depicted in the traffic literature (e.g., \cite{Kerner1998, Helbing2001, Varaiya2005}). Shown here is the free flow curve, and a synchronized flow region.}
\label{fig:fd_data_phase_diagram}
\end{minipage}
\end{figure}

Motivated by the spread of data in the FD, traffic flow is often classified into regimes or \emph{phases}. The two-phase traffic theory (see, e.g., \cite{Helbing2001, Varaiya2005}) divides traffic flow into \emph{free flow} for low densities, and \emph{congested flow} for large densities. Three-phase traffic theory, originally developed by Kerner \cite{Kerner1998}, distinguishes \emph{free flow} for low densities, \emph{synchronized flow} for medium densities, and \emph{wide moving jams} for high densities. A possible depiction of traffic phases for the sensor data is shown in Fig.~\ref{fig:fd_data_phase_diagram}. The free flow region can be characterized by a relatively well defined single valued curve (with the spread due to data noise), while for medium densities (synchronized flow) a multi-valued flow region appears to be needed.

By design, first order traffic models assume a function-based FD, i.e., each density value $\rho$ corresponds to a unique flow rate value $Q$. Therefore, more general traffic models are required to explain the observed spread in FDs. In recent years, a variety of approaches has been presented that describe the spread in FDs by set-valued traffic models. One class is given by phase transition models \cite{Colombo2002, Colombo2003, BlandinWorkGoatinPiccoliBayen2011} that explicitly prescribe one traffic model for low densities, and another traffic model for larger densities, and construct mathematical transition conditions that guarantee well-posed solutions. Other models are based on inserting an instability into a second order model at densities that are known to lead to instability in reality \cite{SiebelMauser2006}.

\subsection{Contribution of this Paper}
In this paper, we demonstrate that the observed structure of FDs, namely a unique $Q$ vs.~$\rho$ relationship for small densities and a set-valued relationship for larger densities (see Fig.~\ref{fig:fd_data}), does already arise within classical second order models, and thus does not necessarily require explicitly prescribed phase transitions. Specifically, we shall develop the theory for two types of models: the classical Payne-Whitham model \cite{Payne1971, Whitham1974}, and the inhomogeneous Aw-Rascle-Zhang model \cite{AwRascle2000, Zhang2002, Greenberg2001}. The key observation is that second order models with a relaxation term possess an intrinsic phase-transition-like behavior: for certain densities, the solutions are very close to the solutions of a first order model (as made precise in the introduction to Sect.~\ref{sec:stability}), while for other densities, the solutions develop \emph{jamitons} \cite{FlynnKasimovNaveRosalesSeibold2009}, i.e., self-sustained, attracting, traveling wave solutions with an embedded shock (described in more detail in Sect.~\ref{sec:jamiton_construction}). As we shall show, these traffic waves result from instabilities and thus can be interpreted as manifestations of \emph{phantom traffic jams}; and the resulting jamitons can be regarded as models for \emph{stop-and-go waves}. Both of these phenomena are observed in real traffic flow \cite{VideoTrafficWaves}, and have been reproduced experimentally \cite{SugiyamaFukuiKikuchiHasebeNakayamaNishinariTadakiYukawa2008}. A key statement of this paper is therefore that set-valued FDs, as well as different traffic phases, are already built into existing second order models of traffic flow, such as the PW model and the inhomogeneous ARZ model. That being said, we emphasize that no statement is made about whether these models are better or worse than other (possibly more complicated) models in describing real vehicular traffic flow.

Beyond the fundamental observation described above, the following theorems are proved in this paper. Based on Whitham's proof of the connection between the linear stability of base state solutions and the sub-characteristic condition (see Thm.~\ref{thm:whitham}), and on the Zel'dovich-von~Neumann-D{\"o}ring theory \cite{FickettDavis1979} of detonations, we show that for both types of second order traffic models, jamitons exist if and only if a corresponding base state solution (i.e., uniform traffic flow) is unstable, see Thms.~\ref{thm:equivalence_jamitons_instability_payne_whitham_model} and~\ref{thm:equivalence_jamitons_instability_aw_rascle_zhang_model}. Moreover, we prove that a jamiton-dominated traffic flow can never possess a higher effective flow rate than a uniform flow of the same average density; this statement is made precise in Thm.~\ref{thm:effective_flow_rate}.

This paper is organized as follows. Sect.~\ref{sec:second_order_traffic_models} consists of a brief presentation of the two types of second order traffic models studied here, as well as their mathematical properties that are relevant for the construction of jamitons. In Sect.~\ref{sec:stability}, we outline the aforementioned key theorem in the study of second order systems of hyperbolic balance laws with a relaxation term: the equivalence of the instability of uniform base states, and the sub-characteristic condition. Then, for common choices of parameters in the considered second order traffic models, the implications of this relationship are outlined. The explicit construction of jamiton solutions in the two types of second order traffic models is presented in Sect.~\ref{sec:jamiton_construction}. In particular, the equivalence between the breakdown of the sub-characteristic condition and the existence of jamitons is shown. In Sect.~\ref{sec:jamitons_fundamental_diagram}, we demonstrate how the preceding results about jamitons and the stability of uniform traffic flow can be employed to systematically construct set-valued fundamental diagrams that qualitatively reproduce the key features of fundamental diagrams obtained from measurements. Finally, in Sect.~\ref{sec:conclusions_and_outlook}, conclusions are drawn and an outlook towards extensions of the presented work is given.

\vspace{1.5em}
\section{Second Order Traffic Models with Relaxation Terms}
\label{sec:second_order_traffic_models}
Macroscopic traffic models describe the temporal evolution of the vehicle density $\rho(x,t)$, where $x$ is the position along a road (multiple lanes are aggregated into a single variable), and $t$ is time. Letting $u(x,t)$ denote the mean flow velocity of the moving vehicles, the density function evolves according to the continuity equation
\begin{equation}
\label{eq:continuity_equation}
\rho_t+(\rho u)_x = 0\;.
\end{equation}
First order traffic models, such as the Lighthill-Whitham-Richards (LWR) model \cite{LighthillWhitham1955, Richards1956}, assume a functional relationship between $\rho$ and $u$, i.e.~$u = U(\rho)$. We call $U$ the \emph{desired velocity function}, and assume it to be decreasing with $\rho$. A simple choice is
\begin{equation}
\label{eq:desired_velocity_linear}
U(\rho) = u_\text{max}(1-\rho/\rho_\text{max})\;,
\end{equation}
which is a linear interpolation between a maximum velocity $u_\text{max}$ of a single vehicle on an otherwise empty road (i.e., $\rho = 0$) and a stagnated flow at some maximum density, i.e., $U(\rho_\text{max}) = 0$. Because \eqref{eq:desired_velocity_linear} does not match very well with measurement data (see Fig.~\ref{fig:fd_data}), a variety of other desired velocity functions has been proposed \cite{Greenberg1959, Underwood1961, Newell1993, Daganzo1994, WangPapageorgiou2005}. The choice of a desired velocity function $U(\rho)$ leads to a scalar hyperbolic conservation law model
\begin{equation}
\label{eq:lighthill_whitham_richards_model}
\rho_t+(Q(\rho))_x = 0\;,
\end{equation}
where the flux function $Q(\rho) = \rho U(\rho)$ describes a unique $Q$ vs.~$\rho$ relation in the associated fundamental diagram of traffic flow. In contrast, second order models augment \eqref{eq:continuity_equation} by an evolution equation for the velocity field $u$. Below, we outline two classical types of second order models: the Payne-Whitham model and the inhomogeneous Aw-Rascle-Zhang model.
\begin{rem}
These two different types of second order models are studied in order to demonstrate that the results of this paper are not limited to only one specific second order model, but instead hold at a wider level of generality. That being said, it is known that the PW model \eqref{eq:payne_whitham_model} can develop solutions with unrealistic features, such as negative velocities or shocks that overtake vehicles from behind \cite{Daganzo1995}. It is therefore an invalid model for velocities close to zero, or when used with initial conditions that create spurious shocks. However, the analysis in Sect.~\ref{subsec:jamiton_construction_payne_whitham_model} implies that solutions with unrealistic shocks cannot arise dynamically as parts of traveling wave solutions, see Rem.~\ref{rem:pw_model_shocks}. Results of this nature are of interest in particular for the understanding of simulators based on the PW model (c.f.~\cite{MessmerPapageorgiou1990}).
\end{rem}

\subsection{Payne-Whitham Model}
\label{subsec:second_order_traffic_models_payne_whitham_model}
The Payne-Whitham (PW) model \cite{Payne1971, Whitham1974}, written in the primitive variables $\rho$ and $u$, reads as
\begin{equation}
\label{eq:payne_whitham_model}
\begin{split}
\rho_t+(\rho u)_x &= 0\;, \\
u_t+uu_x+\tfrac{1}{\rho}p(\rho)_x &= \tfrac{1}{\tau}\prn{U(\rho)-u}\;,
\end{split}
\end{equation}
where the \emph{traffic pressure} $p(\rho)$ is a strictly increasing function and $\tau$ is a relaxation time that dictates how rapidly drivers adjust to their desired velocity $U(\rho)$. The PW model \eqref{eq:payne_whitham_model} is a system of hyperbolic balance laws that possesses a relaxation term in the velocity equation. In analogy to fluid dynamics, the PW model is generally taken to possess the conservative quantities $\rho$ and $q = \rho u$, in which it reads as
\begin{equation}
\label{eq:payne_whitham_model_conservative}
\begin{split}
\rho_t+q_x &= 0\;, \\
q_t+\prn{\tfrac{q^2}{\rho}+p(\rho)}_x &= \tfrac{1}{\tau}\prn{\rho U(\rho)-q}\;.
\end{split}
\end{equation}
While \eqref{eq:payne_whitham_model} and \eqref{eq:payne_whitham_model_conservative} are equivalent for smooth solutions, the propagation of shocks is determined by the Rankine-Hugoniot conditions \cite{Evans1998} associated with the form \eqref{eq:payne_whitham_model_conservative}. The characteristic velocities of the PW model are
\begin{equation}
\label{eq:payne_whitham_model_characteristic_velocities}
\lambda_1 = \tfrac{q}{\rho}-\sqrt{p'(\rho)}
\quad\text{and}\quad
\lambda_2 = \tfrac{q}{\rho}+\sqrt{p'(\rho)}\;,
\end{equation}
i.e., in the PW model, information travels with the velocities $u\pm c(\rho)$, where $c(\rho) = \sqrt{p'(\rho)}$.

System \eqref{eq:payne_whitham_model_conservative} is the PW model written in Eulerian variables $\rho(x,t)$ and $q(x,t)$. An alternative description, as for instance employed in \cite{Greenberg2004}, is to use Lagrangian variables $\vv(\sigma,t)$ and $u(\sigma,t)$. Here $\sigma$ is the vehicle number, defined so that
\begin{equation}
\label{eq:relation_sigma_x_t}
\ud{\sigma} = \rho\ud{x}-\rho u\ud{t}\;,
\end{equation}
and $\vv = 1/\rho$ is the specific traffic volume, i.e., the road length per vehicle. In these variables, the PW model reads as
\begin{equation}
\label{eq:payne_whitham_model_lagrangian}
\begin{split}
\vv_t - u_\sigma &= 0\;, \\
u_t + (\hat{p}(\vv))_\sigma &= \tfrac{1}{\tau}\prn{\hat{U}(\vv)-u}\;,
\end{split}
\end{equation}
where $\hat{p}(\vv) = p(1/\vv)$ and $\hat{U}(\vv) = U(1/\vv)$. For notational economy, we shall hereafter omit the hat in these two functions, except when required for clarity.

The Rankine-Hugoniot jump conditions associated with \eqref{eq:payne_whitham_model_lagrangian} are
\begin{equation}
\label{eq:payne_whitham_model_rankine_hugoniot_conditions}
\begin{split}
m\brk{\vv}-\brk{u} &= 0\;, \\
m\brk{u}+\brk{p(\vv)} &= 0\;.
\end{split}
\end{equation}
Here $\brk{z}$ is the magnitude of the jump in the variable $z$ across a shock, and $-m$ is the propagation speed of the shock (in the Lagrangian, i.e., vehicle-centered, frame of reference).

\subsection{Inhomogeneous Aw-Rascle-Zhang Model}
\label{subsec:second_order_traffic_models_aw_rascle_zhang_model}
The inhomogeneous Aw-Rascle-Zhang model \cite{AwRascle2000, Zhang2002, Greenberg2001} in primitive Eulerian variables reads as
\begin{equation}
\label{eq:aw_rascle_zhang_model}
\begin{split}
\rho_t+(\rho u)_x &= 0\;, \\
(u+h(\rho))_t+u(u+h(\rho))_x &= \tfrac{1}{\tau}\prn{U(\rho)-u}\;,
\end{split}
\end{equation}
where the \emph{hesitation function} $h(\rho)$ is a strictly increasing function. As in the PW model, one assumes, in a loose analogy to fluid dynamics, the conservative quantities to be $\rho$ and $q = \rho (u+h(\rho))$. In these variables, the model reads as
\begin{equation}
\label{eq:aw_rascle_zhang_model_conservative}
\begin{split}
\rho_t+q_x &= 0\;, \\
q_t+\prn{\tfrac{q^2}{\rho}-h(\rho)q}_x &= \tfrac{1}{\tau}\prn{\rho (U(\rho)+h(\rho))-q}\;.
\end{split}
\end{equation}
The characteristic velocities of the ARZ model are
\begin{equation}
\label{eq:aw_rascle_zhang_model_characteristic_velocities}
\lambda_1 = \tfrac{q}{\rho}-h(\rho)-\rho h'(\rho) = u-\rho h'(\rho)
\quad\text{and}\quad
\lambda_2 = \tfrac{q}{\rho}-h(\rho) = u\;,
\end{equation}
where $u = \tfrac{q}{\rho}-h(\rho)$. Thus, in the ARZ model, information propagates along one family of characteristics with the vehicles, and along the second family of characteristics against traffic (relative to the vehicles) with velocity $\rho h'(\rho)$.

In Lagrangian variables, the ARZ model becomes
\begin{equation}
\label{eq:aw_rascle_zhang_model_lagrangian}
\begin{split}
\vv_t - u_\sigma &= 0\;, \\
(u+\hat{h}(\vv))_t &= \tfrac{1}{\tau}\prn{\hat{U}(\vv)-u}\;,
\end{split}
\end{equation}
where $\hat{h}(\vv) = h(1/\vv)$ and $\hat{U}(\vv) = U(1/\vv)$. Again, we shall generally omit these hats from now on. The associated Rankine-Hugoniot conditions are
\begin{equation}
\label{eq:aw_rascle_zhang_model_rankine_hugoniot_conditions}
\begin{split}
m\brk{\vv}-\brk{u} &= 0\;, \\
\brk{u}+\brk{h(\vv)} &= 0\;.
\end{split}
\end{equation}

\subsection{Assumptions on the Desired Velocity, Pressure, and Hesitation}
\label{subsec:assumptions_U_p_h}
In addition to requiring that $U(\rho)$, $p(\rho)$, and $h(\rho)$ be twice continuously differentiable, we impose the following assumptions:
\begin{enumerate}[ (a)]
\item
$\frac{\ud{U}}{\ud{\vv}}>0$ and $\frac{\ud{}^2U}{\ud{\vv^2}}<0$. This is equivalent to $U(\rho)$ being decreasing, and the flux function $Q(\rho) = \rho U(\rho)$ in the corresponding LWR model \eqref{eq:lighthill_whitham_richards_model} being concave. The concavity of $Q(\rho)$ is a frequent assumption in traffic flow (see for instance the discussions in \cite{LighthillWhitham1955} and \cite{Whitham1974}). In particular, it guarantees a nice shock theory for the LWR model, where the shocks satisfy the Lax entropy condition \cite{LeVeque1992} strictly.
\item
$\frac{\ud{p}}{\ud{\vv}}<0$ and $\frac{\ud{}^2p}{\ud{\vv^2}}>0$. These conditions lead to well-posed equations for the PW model \eqref{eq:payne_whitham_model_lagrangian}, even in the presence of shocks \cite{LeVeque1992}. These conditions are equivalent to: as functions of $\rho$, the pressure $p(\rho)$ is increasing and the function $\rho\,p(\rho)$ is convex. \item
$\frac{\ud{h}}{\ud{\vv}}<0$ and $\frac{\ud{}^2h}{\ud{\vv^2}}>0$, for the same reasons as for $p$.
\end{enumerate}
Note that all the example functions $U$, $p$, and $h$ used in this paper satisfy these assumptions.

\vspace{1.5em}
\section{Instability of Uniform Flow and Sub-Characteristic Condition}
\label{sec:stability}
A particular feature of balance laws of the form \eqref{eq:payne_whitham_model} and \eqref{eq:aw_rascle_zhang_model} is the relaxation term in the (generalized) momentum equation, which is expected to drive the actual velocity $u$ towards the desired velocity $U(\rho)$ when $\tau\ll 1$. If this is indeed the case, the continuity equation \eqref{eq:continuity_equation} shows that the solution will be very close to the one of the LWR model \eqref{eq:lighthill_whitham_richards_model} with the flux function $Q(\rho) = \rho U(\rho)$.

Note that \eqref{eq:lighthill_whitham_richards_model} has a single characteristic velocity
\begin{equation}
\label{eq:lighthill_whitham_richards_model_characteristic_velocity}
\mu = Q'(\rho) = U(\rho)+\rho U'(\rho)\;,
\end{equation}
while the second order models \eqref{eq:payne_whitham_model} or \eqref{eq:aw_rascle_zhang_model} possess two characteristic velocities $\lambda_1<\lambda_2$, which depend in particular on the traffic pressure $p(\rho)$, or the hesitation function $h(\rho)$, respectively.

Moreover, one can observe that all three models \eqref{eq:payne_whitham_model}, \eqref{eq:aw_rascle_zhang_model}, and \eqref{eq:lighthill_whitham_richards_model} share a set of solutions, namely the \emph{base state solutions}, in which $\rho(x,t) = \bar{\rho} = \text{const.}$ and $u = U(\bar{\rho}) = \text{const}$. These solutions represent uniform traffic flow with equi-spaced vehicles (whose midpoints are a distance $1/\bar{\rho}$ apart). Let now $\rho = \bar{\rho}$, $u = \bar{u} = U(\bar{\rho})$ be any given base state solution of a second order model and consider the following two conditions that can apply:
\begin{itemize}
\item[\textbf{LS}] (linear stability) \\
The base state $(\bar{\rho},\bar{u})$ is linearly stable, i.e., any infinitesimal perturbations to it decay in time.
\item[\textbf{SCC}] (sub-characteristic condition) \\
The characteristic velocity $\mu = \mu(\bar{\rho})$, given by \eqref{eq:lighthill_whitham_richards_model_characteristic_velocity}, of the \emph{reduced} LWR model \eqref{eq:lighthill_whitham_richards_model} falls in between the two characteristic velocities $\lambda_j = \lambda_j(\bar{\rho},\bar{u})$ of the second order model, i.e.
\begin{equation}
\label{eq:sub_characteristic_condition}
\lambda_1 < \mu < \lambda_2\;.
\end{equation}
\end{itemize}
These two conditions are related by the following
\begin{thm}[Whitham]
\label{thm:whitham}
The two conditions LS and SCC are equivalent.
\end{thm}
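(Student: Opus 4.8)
The plan is to prove Theorem~\ref{thm:whitham} by a direct linearization of the second order model about the base state $(\bar\rho,\bar u)$ with $\bar u = U(\bar\rho)$, followed by a normal-mode (dispersion-relation) analysis. Since the statement is an equivalence between linear stability (LS) and the sub-characteristic condition (SCC), and since both conditions are stated for a general base state, it suffices to work with one of the two second order systems in, say, Lagrangian form \eqref{eq:payne_whitham_model_lagrangian} (the ARZ case being entirely analogous and reducible to the same computation with $p$ replaced by the appropriate function built from $h$). First I would write $\vv = \bar\vv + \hat\vv\, e^{\mathrm{i}k\sigma + st}$ and $u = \bar u + \hat u\, e^{\mathrm{i}k\sigma + st}$ with $\bar\vv = 1/\bar\rho$, substitute into the linearized equations, and obtain a $2\times 2$ homogeneous linear system for $(\hat\vv,\hat u)$. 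Setting its determinant to zero yields the dispersion relation, a quadratic in the growth rate $s$ whose coefficients depend on $k$, on $p'(\bar\vv)$, on $U'(\bar\vv)$, and on $\tau$.

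Next I would extract the stability criterion. Linear stability (LS) requires $\mathrm{Re}\, s \le 0$ for all real wavenumbers $k$. The natural route is to treat the two limits separately and then argue monotonicity. In the long-wave limit $k \to 0$ the dispersion relation should reproduce the characteristic velocity $\mu$ of the reduced LWR model, while the short-wave behavior is governed by the two characteristic speeds $\lambda_1,\lambda_2$ of the second order model. The decisive computation is to apply the Routh--Hurwitz criterion to the quadratic in $s$: since the relaxation term supplies a damping coefficient proportional to $1/\tau$, the condition that both roots have nonpositive real part reduces, after simplification, to an inequality comparing $\mu$ against the interval $(\lambda_1,\lambda_2)$. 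Concretely, using $\lambda_{1,2} = \bar u \mp c(\bar\rho)$ with $c = \sqrt{p'(\bar\rho)}$ (in Eulerian terms) and $\mu = U(\bar\rho) + \bar\rho\, U'(\bar\rho)$, one expects the Routh--Hurwitz positivity condition to collapse exactly to $\lambda_1 < \mu < \lambda_2$, which is \eqref{eq:sub_characteristic_condition}.

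The step I expect to be the main obstacle is verifying that the Routh--Hurwitz condition for the full quadratic, valid for \emph{all} $k$, is equivalent to the single algebraic inequality (SCC) evaluated at the base state, rather than merely implied by it at one particular wavenumber. The worry is that stability could in principle fail at some intermediate $k$ even when the $k\to 0$ and $k\to\infty$ limits are both stable; resolving this requires showing the relevant Hurwitz coefficient is monotone (or sign-definite) in $k^2$, so that the worst case occurs at an endpoint of the $k$-range. I would handle this by exploiting assumptions (a) and (b) of Sect.~\ref{subsec:assumptions_U_p_h} (monotonicity of $U$ and $p$, and the convexity/concavity conditions), which pin down the signs of $U'$, $p'$, and their second derivatives and thereby force the needed monotonicity of the Hurwitz expression. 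Once this sign analysis is in place, the equivalence LS $\Leftrightarrow$ SCC follows by reading the Routh--Hurwitz condition forward and backward. Finally I would remark that the identical argument, with $p'(\vv)$ replaced by the analogous quantity from $h$ in the ARZ dispersion relation, establishes the result for the inhomogeneous Aw--Rascle--Zhang model as well, so a single computation covers both models.
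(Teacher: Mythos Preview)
Your proposal is correct and follows precisely the approach the paper attributes to Whitham: linearize about the base state, insert normal modes $e^{ik\sigma+st}$, derive the quadratic dispersion relation, and show that the condition $\mathrm{Re}\,s\le 0$ for all real $k$ is algebraically equivalent to \eqref{eq:sub_characteristic_condition}. One small correction: the ``main obstacle'' you anticipate---that the Routh--Hurwitz-type condition must hold for all $k$---does not in fact require the convexity assumptions (a) and (b) of Sect.~\ref{subsec:assumptions_U_p_h}; Whitham's argument works for a general $2\times 2$ hyperbolic system with relaxation, and the sign analysis of the dispersion relation uses only $p'(\bar\vv)<0$ (hyperbolicity) and the first-derivative information in $U'$, so you should not expect to need second derivatives here.
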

The proof, given by Whitham in 1959 \cite{Whitham1959}, and thoroughly reviewed in his 1974 book \cite[Chap.~10]{Whitham1974}, is constructive: for a general $2\times 2$ hyperbolic conservation law with a relaxation term in one equation, the standard linear stability analysis (calculating the growth factor for basic wave perturbations of the form $e^{ikx}$) is performed, and subsequently related to the sub-characteristic condition.

Besides the theoretical insight that Thm.~\ref{thm:whitham} yields (see the discussion in \cite{Whitham1974}), it allows one to substitute the task of performing a linear stability analysis by, for instance, normal modes (e.g., as done for \eqref{eq:payne_whitham_model} in \cite[App.~B]{FlynnKasimovNaveRosalesSeibold2009} and for \eqref{eq:aw_rascle_zhang_model} in \cite{Greenberg2004}) by checking the sub-characteristic condition, which is often times simpler to verify.

In the context of traffic flow, an important question is: when can one use a first order model and (essentially) get the same answer as that given by a second order model? This question, which turns out to be intimately related to the result in Thm.~\ref{thm:whitham}, has been investigated by several authors, e.g., Liu \cite{Liu1987}, Chen, Levermore \& Liu \cite{ChenLevermoreLiu1994}, and Li \& Liu \cite{LiLiu2005} (note that these papers consider situations that are more general than the ones arising for traffic flow). Thus, for example, it is known that:
\begin{enumerate}[ 1.]
\item
Solutions of a second order traffic model with uniform base state initial conditions, plus a sufficiently small perturbation, converge towards this base state (i.e., the perturbation decays) as $t\to\infty$, if the base state satisfies the SCC. In other words, the equivalence in Thm.~\ref{thm:whitham} can be extended from relating SCC and LS, to relating SCC and stability.
\item
The solutions to second order models that satisfy the SCC everywhere, converge towards solutions of the reduced LWR model in the limit $\tau\to 0$. More generally, if $\tau$ is not small, then the solutions converge towards the solutions of a reduced LWR model with a nonlinear ``viscosity'', which is of magnitude $O(\tau)$.
\end{enumerate}
An important question that the work cited above does not address is:
\begin{enumerate}[ 3.]
\item
If constant base state (plus small perturbation) initial conditions (strictly) violate the SCC, how do the solutions to a second order model evolve in time?
\end{enumerate}
To the authors' knowledge, no rigorous results are known in this case, except for the fact that the solutions to the second order models need not look like solutions to the reduced LWR model \eqref{eq:lighthill_whitham_richards_model}, even if $\tau\ll 1$. However:
\begin{enumerate}[ 4.]
\item
Numerical studies (for the PW model in \cite{FlynnKasimovNaveRosalesSeibold2009}, and for the ARZ model in \cite{Greenberg2004}) indicate that the solutions of second order traffic flow models tend towards a jamiton-dominated regime, characterized by the presence of self-sustained finite amplitude traveling waves.
\end{enumerate}
The construction of these traveling waves is addressed in Sect.~\ref{sec:jamiton_construction}. However, before doing so, we calculate the sub-characteristic condition for the PW and the ARZ model below.
\begin{rem}
\label{rem:degenerate_SCC}
The analysis of the degenerate case of the SCC condition, i.e., $\lambda_1 = \mu < \lambda_2$ or $\lambda_1 < \mu = \lambda_2$, in \eqref{eq:sub_characteristic_condition}, is more involved (c.f.~\cite{Li2000, LiLiu2009}) and we do not consider it in this paper.
\end{rem}

\subsection{Payne-Whitham Model}
\label{subsec:stability_payne_whitham_model}
Using the characteristic velocities of the PW model, \eqref{eq:payne_whitham_model_characteristic_velocities}, and of the LWR model, \eqref{eq:lighthill_whitham_richards_model_characteristic_velocity}, the sub-characteristic condition \eqref{eq:sub_characteristic_condition} reduces to
\begin{equation*}
-\sqrt{p'(\rho)} < \rho U'(\rho) < \sqrt{p'(\rho)}\;.
\end{equation*}
Because $U(\rho)$ is decreasing and $p(\rho)$ is increasing, the second inequality is always satisfied, rendering the SCC, and thus the stability of uniform flow of density $\rho$, equivalent to
\begin{equation}
\label{eq:scc_payne_whitham}
\frac{p'(\rho)}{\rho^2} > (U'(\rho))^2\;.
\end{equation}
Regarding reasonable choices for $U(\rho)$ and $p(\rho)$, the desired velocity function has a clear physical interpretation. In this paper, we consider two forms of $U(\rho)$: the simple linear relationship \eqref{eq:desired_velocity_linear}, and a more complicated shape, given via \eqref{eq:smoothed_Newell_Daganzo_flux}. In contrast, which choices are reasonable for the traffic pressure is more difficult to motivate. For the linear velocity $U(\rho) = u_\text{max}(1-\rho/\rho_\text{max})$, we briefly study two classes of traffic pressures in terms of their behavior with respect to the (in)stability of base states. We argue that reasonable traffic models for phantom traffic jams must have the property that low density base states are stable (\emph{free flow}), instability occurs above a certain critical threshold density (\emph{congested flow} or \emph{synchronized flow}), and possibly stability re-occurs for densities close to the maximum density $\rho_\text{max}$ (\emph{wide moving jams}), where the terminology is in line with \cite{Kerner1998, Varaiya2005}.
We distinguish two types of pressures:
\begin{enumerate}[(A)]
\item\textbf{regular pressure}:
Drawing an analogy to gas dynamics, a simple choice is $p(\rho) = \beta\rho^\gamma$, where $\gamma>0$. In this case, the SCC becomes $\rho^{\gamma-3} > \frac{(u_\text{max})^2}{\beta\gamma(\rho_\text{max})^2}$. One can see that for $\gamma>3$, low densities violate the SCC, and for $\gamma=3$, no transitions between stability or instability occur. Hence, reasonable behavior occurs only for $\gamma<3$, for which instability occurs above the critical density $\rho_\text{crit} = (\beta\gamma(\rho_\text{max})^2/(u_\text{max})^2)^\frac{1}{3-\gamma}$. Some possible choices are $\gamma=1$, as in \cite{KernerKlenovKonhauser1997}, or $\gamma=2$, which renders the PW model \eqref{eq:payne_whitham_model} equivalent to the shallow water equations with a relaxation term.
\item\textbf{singular pressure:}
Stability for densities close to $\rho_\text{max}$ can be achieved by selecting a pressure $p(\rho)$ with a pole at $\rho_\text{max}$. For example, as in \cite{FlynnKasimovNaveRosalesSeibold2009}, we can take
\begin{equation}
\label{eq:logarithmic_pressure}
p(\rho) = -\beta(\rho/\rho_\text{max}+\log(1-\rho/\rho_\text{max}))\;,
\end{equation}
for which the SCC becomes $\frac{\beta/\rho_\text{max}}{\rho(\rho_\text{max}-\rho)} > \frac{(u_\text{max})^2}{(\rho_\text{max})^2}$, or equivalently $\frac{\rho}{\rho_\text{max}}(1-\frac{\rho}{\rho_\text{max}}) < \frac{\beta}{\rho_\text{max}(u_\text{max})^2}$. Thus, there is a range of unstable base states that is centered around $\tfrac{1}{2}\rho_\text{max}$, and is of width $((\tfrac{1}{2}\rho_\text{max})^2-\frac{\beta\rho_\text{max}}{(u_\text{max})^2})^\frac{1}{2}$. Hence, for low densities, and high densities (i.e, close to $\rho_\text{max}$), uniform traffic flow is stable. Thus, singular pressures can generate traffic regimes qualitatively close to those presented by Kerner \cite{Kerner1998}. We shall use this pressure \eqref{eq:logarithmic_pressure} for some of the examples presented in Sect.~\ref{sec:jamitons_fundamental_diagram}.
\end{enumerate}

\subsection{Inhomogeneous Aw-Rascle-Zhang Model}
\label{subsec:stability_aw_rascle_zhang_model}
Using the characteristic velocities of the ARZ model, \eqref{eq:aw_rascle_zhang_model_characteristic_velocities}, and of the LWR model, \eqref{eq:lighthill_whitham_richards_model_characteristic_velocity}, the sub-characteristic condition \eqref{eq:sub_characteristic_condition} reduces to
\begin{equation*}
-\rho h'(\rho) < \rho U'(\rho) < 0\;.
\end{equation*}
Because $U(\rho)$ is decreasing, the second inequality is always satisfied, rendering the SCC, and thus the stability of uniform flow of density $\rho$, equivalent to
\begin{equation}
\label{eq:scc_aw_rascle_zhang}
h'(\rho) > -U'(\rho)\;.
\end{equation}
As in the PW model, we study two types of hesitation functions, in conjunction with the desired velocity function $U(\rho) = u_\text{max}(1-\rho/\rho_\text{max})$:
\begin{enumerate}[(A)]
\item\textbf{regular hesitation}:
For the choice $h(\rho) = \beta\rho^\gamma$ with $\gamma>0$, the SCC becomes $\rho^{\gamma-1} > \frac{u_\text{max}}{\beta\gamma\rho_\text{max}}$. Similarly as with the PW model, realistic behavior of (in)stability regimes only occurs if $\gamma<1$.
\item\textbf{singular hesitation:}
As with the PW model, stability can be recovered close to $\rho_\text{max}$ by letting $h(\rho)$ become singular there. A possible choice, suggested in \cite{BerthelinDegondDelitalaRascle2008}, is
\begin{equation}
\label{eq:BDDR_hesitation}
h(\rho) = \beta\prn{\tfrac{\rho/\rho_\text{max}}{1-\rho/\rho_\text{max}}}^\gamma\;.
\end{equation}
Since for $\rho\ll 1$, the function $h(\rho)$ behaves as in the regular case, one can again only achieve realistic (in)stability regimes if $\gamma<1$. In this case, $h(\rho)$ possesses an inflection point and thus can generate the traffic regimes presented by Kerner \cite{Kerner1998}. In Sect.~\ref{sec:jamitons_fundamental_diagram}, we shall employ \eqref{eq:BDDR_hesitation} with $\gamma=\frac{1}{2}$.
\end{enumerate}

\vspace{1.5em}
\section{Construction of Jamiton Solutions}
\label{sec:jamiton_construction}
Jamitons are self-sustained traveling waves in second order traffic flow models with a relaxation term. They possess the same mathematical structure as detonation waves in the Zel'dovich-von~Neumann-D{\"o}ring (ZND) theory \cite{FickettDavis1979}. A ZND detonation wave consists of a compressive shock with a smooth reaction zone attached to it, which is traveling at the same velocity as the shock. The ideas presented in this section are an adaptation of this theory to the PW and the ARZ traffic models. A theory valid for generic second order traffic flow models is presented in a companion paper \cite{KasimovRosalesSeiboldFlynn2012}.

In order to obtain expressions for jamitons, a traveling wave ansatz is performed. This approach has been presented in \cite{FlynnKasimovNaveRosalesSeibold2009} in Eulerian variables, wherein one considers solutions of the form $\rho(x,t) = \hat{\rho}(\eta)$ and $u(x,t) = \hat{u}(\eta)$, where $\eta = \frac{x-st}{\tau}$ is the self-similar variable. Here, we use the Lagrangian variables defined in Sect.~\ref{sec:second_order_traffic_models}, which have in particular been employed in Greenberg's study of the ARZ model \cite{Greenberg2004}. Note that, while the analyses in Lagrangian and in Eulerian variables are mathematically equivalent, it is the former choice of variables that affords the clear geometric interpretations and constructions conducted below.

\subsection{Payne-Whitham Model}
\label{subsec:jamiton_construction_payne_whitham_model}
We consider system \eqref{eq:payne_whitham_model_lagrangian} and seek solutions of the form $\vv(x,t) = \hat{\vv}(\chi)$ and $u(x,t) = \hat{u}(\chi)$, where $\chi = \frac{mt+\sigma}{\tau}$ is the self-similar variable. For simplicity of notation, we shall omit the hats in the following. Substituting this self-similar form into \eqref{eq:payne_whitham_model_lagrangian} yields the relations
\begin{align}
\label{eq:payne_whitham_model_lagrangian_self_similar_1}
\tfrac{m}{\tau}\vv'(\chi) - \tfrac{1}{\tau}u'(\chi) &= 0\;, \\
\label{eq:payne_whitham_model_lagrangian_self_similar_2}
\tfrac{m}{\tau}u'(\chi) + p'(\vv(\chi))\tfrac{1}{\tau}\vv'(\chi) &= \tfrac{1}{\tau}\prn{U(\vv(\chi))-u(\chi)}\;.
\end{align}
Relation \eqref{eq:payne_whitham_model_lagrangian_self_similar_1} implies that
\begin{equation}
\label{eq:relation_v_u}
m\vv-u = -s\;,
\end{equation}
where $s$ is a constant of integration.
\begin{rem}
In Lagrangian variables, $-m$ is the propagation velocity of jamitons, i.e., jamitons travel backward at speed $m$ with respect to the moving vehicles. The constant $s$ has the interpretation of a ``Lagrangian mass flux''. In Eulerian variables, the meaning of these two constants is exactly reversed: jamitons move at velocity $s$ with respect to a fixed observer, and $m$ is the ``mass'' flux of vehicles past a stationary observer. Note that due to the entropy condition described in Sect.~\ref{subsubsec:shocks}, solutions of the type above can exist only for $m>0$. On the other hand, there is no sign restriction on $s$.
\end{rem}
Using relation \eqref{eq:relation_v_u} to substitute $u$ by $\vv$ in \eqref{eq:payne_whitham_model_lagrangian_self_similar_2}, we obtain a first order ODE for $\vv(\chi)$, which reads as
\begin{equation}
\label{eq:jamiton_ode}
\vv'(\chi) = \frac{w(\vv(\chi))}{r'(\vv(\chi))}\;,
\end{equation}
where the two functions $w$ and $r$ are defined as
\begin{equation}
\label{eq:definition_w_r}
w(\vv) = U(\vv)-(m\vv+s)
\quad\text{and}\quad
r(\vv) = p(\vv)+m^2\vv\;,
\end{equation}
and depicted in Fig.~\ref{fig:wv_diagram} and Fig.~\ref{fig:rv_diagram}, respectively. At first glance, it appears that the constants $m$ and $s$ can be chosen independently. However, this is not the case. To see this, we use the assumptions $p'(\vv)<0$ and $p''(\vv)>0$ (see Sect.~\ref{subsec:assumptions_U_p_h}). Then, the denominator in \eqref{eq:jamiton_ode}, $r'(\vv)$, may have at most one root, $\vS$, such that $p'(\vS) = -m^2$. The ODE \eqref{eq:jamiton_ode} can only be integrated through $\vS$, if the numerator in \eqref{eq:jamiton_ode}, $w(\vv)$, has a (simple) root at $\vS$ as well, i.e.
\begin{equation}
\label{eq:chapman_jouguet_condition}
m\vS+s = U(\vS)\;.
\end{equation}
In ZND detonation theory, relation \eqref{eq:chapman_jouguet_condition} is known as the \emph{Chapman-Jouguet condition} \cite{FickettDavis1979}, and the point at which $r'(\vv)$ vanishes is called \emph{sonic point}. We can therefore parameterize the smooth traveling wave solutions of the jamiton ODE \eqref{eq:jamiton_ode} by the \emph{sonic value} $\vS$. Given $\vS$, the two constants are
\begin{equation}
\label{eq:values_m_s_payne_whitham_model}
m = \sqrt{-p'(\vS)} \quad\text{and}\quad s = U(\vS)-m\vS\;.
\end{equation}
Hence, we obtain a one-parameter family of smooth traveling wave solutions, parameterized by $\vS$, each of which is a solution of \eqref{eq:jamiton_ode}.

Whether, in the vicinity of $\vS$, the function $\vv(\chi)$ is increasing or decreasing, is determined by the sign of the right hand side in \eqref{eq:jamiton_ode}, which, by L'H{\^o}pital's rule, equals
\begin{equation*}
\frac{w'(\vS)}{r''(\vS)} = \frac{U'(\vS)-m}{p''(\vS)}\;.
\end{equation*}
Because $p''(\vv)>0$, we note that $\vv(\chi)$ is increasing if $U'(\vS)>m$, and decreasing if $U'(\vS)<m$; the degenerate case $U'(\vS) = m$ yields only constant solutions.

\begin{figure}
\begin{minipage}[b]{.485\textwidth}
\includegraphics[width=\textwidth]{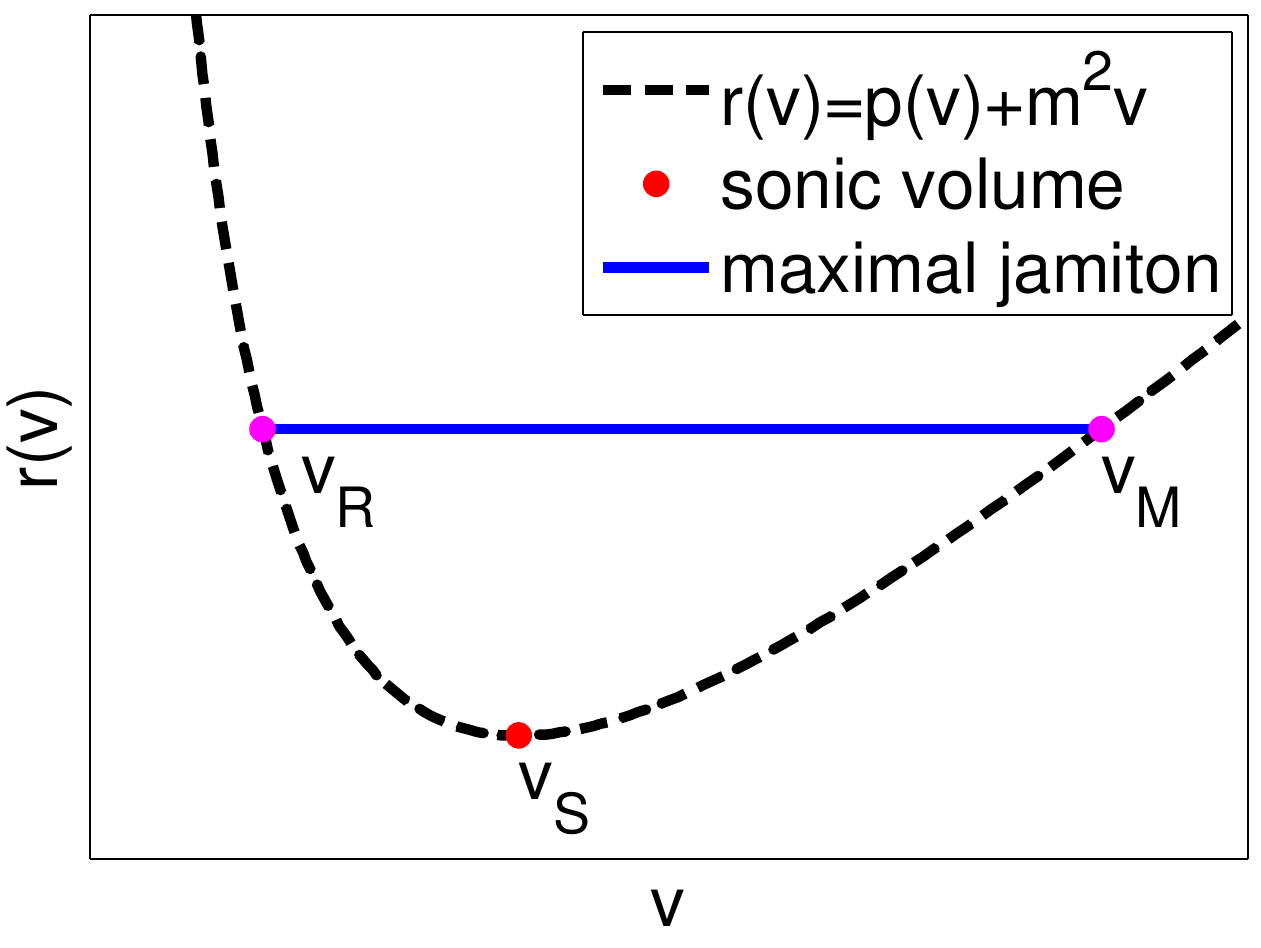}
\vspace{-1.8em}
\caption{The $r$--$\vv$ diagram in the $p$--$\vv$ plane. The sonic value $\vS$ is located in the minimum of $r(\vv)$, and the maximal jamiton (blue line) connects the states $\vv_\text{R}$ and $\vv_\text{M}$.}
\label{fig:rv_diagram}
\end{minipage}
\hfill
\begin{minipage}[b]{.485\textwidth}
\includegraphics[width=\textwidth]{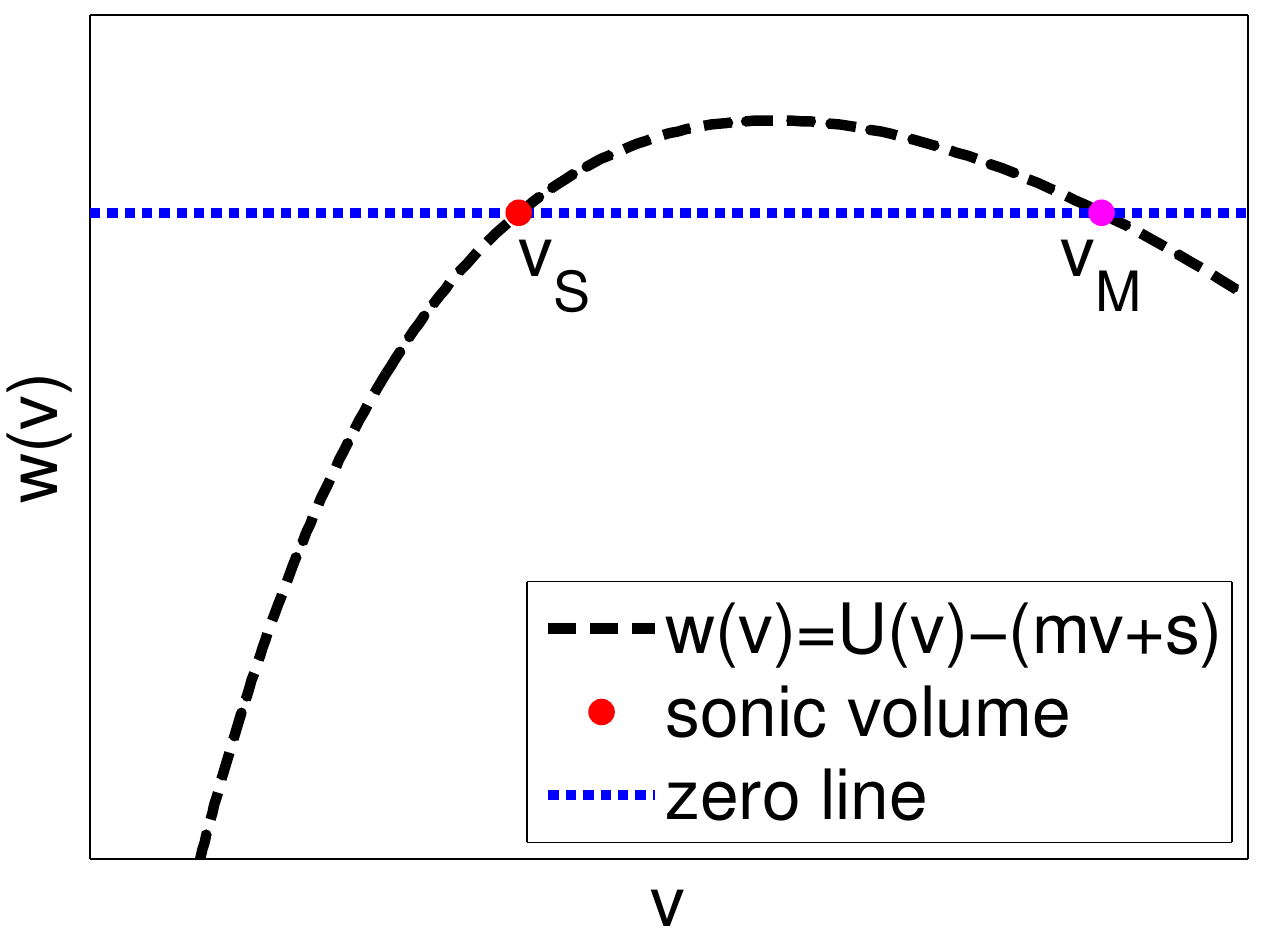}
\vspace{-1.8em}
\caption{The $w$--$\vv$ diagram in the $u$--$\vv$ plane. Jamitons exist exactly if $w(\vv)$ possesses a second root $\vv_\text{M}$ right of $\vS$, which is the case if $U'(\vS)>m$.}
\label{fig:wv_diagram}
\end{minipage}
\end{figure}

Using that $U''(\vv)<0$ (see Sect.~\ref{subsec:assumptions_U_p_h}), we know about the numerator in \eqref{eq:jamiton_ode}, $w(\vv)$, that: if $U'(\vS)>m$, a second root, $\vv_\text{M}$, can exist for $\vv>\vS$, but no further roots can exist for $\vv<\vS$ (see Fig.~\ref{fig:wv_diagram}). If, on the other hand, $U'(\vS)<m$, a second root, $\vv_\text{M}$, can exist for $\vv<\vS$, but no further roots can exist for $\vv>\vS$. As a consequence, the solution of \eqref{eq:jamiton_ode} can extend to infinity in the direction $\chi\to +\infty$ (for which $\vv\to\vv_\text{M}$). In turn, in the direction $\chi\to -\infty$ the solution approaches a vacuum state ($\vv\to\infty$), the jamming density ($\vv\to (\rho_\text{max})^{-1}$), or an infinite density ($\vv\to 0$), depending on the type of pressure used (see Sect.~\ref{subsec:stability_payne_whitham_model}). In this paper we consider none of these (rather extreme) situations, and instead focus on solutions that possess shocks, see below.
\begin{rem}
A complete classification of traveling wave solutions (with or without shocks) is provided in a companion paper \cite{SeiboldRosalesFlynnKasimov2013}.
\end{rem}

\subsubsection{Shocks}
\label{subsubsec:shocks}
Because we wish to consider only traveling waves that do not approach an extremal state for $\chi\to -\infty$, a shock must exist that connects a state $(\vv^-,u^-)$ to the left of the jump to a state $(\vv^+,u^+)$ to the right of the jump. A shock can be part of the traveling wave solution, if it travels with the same velocity $m$. The Rankine-Hugoniot jump conditions \eqref{eq:payne_whitham_model_rankine_hugoniot_conditions} now impose two restrictions on the states connected by the shock, as follows. The first condition in \eqref{eq:payne_whitham_model_rankine_hugoniot_conditions} implies that $m\vv^- - u^- = m\vv^+ - u^+$, i.e., the quantity $m\vv-u$ is conserved across the shock. This is in line with condition \eqref{eq:relation_v_u} for the smooth part of a traveling wave. A second requirement is obtained by combining the two jump conditions in \eqref{eq:payne_whitham_model_rankine_hugoniot_conditions} to yield
\begin{equation*}
m^2\brk{\vv} + \brk{p(\vv)} = 0\;,
\end{equation*}
which is equivalent to the statement that $r(\vv^-) = r(\vv^+)$, where $r$ is precisely the function defined in \eqref{eq:definition_w_r}. Therefore, any jamiton shock connects two states $\vv^-$ and $\vv^+$ by a horizontal line in the $r$--$\vv$ diagram, as shown in Fig.~\ref{fig:rv_diagram_jamiton}.

In addition to the Rankine-Hugoniot conditions, a stable shock must also satisfy the Lax entropy conditions (see e.g.~\cite{Whitham1974, LeVeque1992}). For a shock that is part of a traveling wave (see \cite{FlynnKasimovNaveRosalesSeibold2009} for a visualization), this requires that the specific volume decreases across a shock, i.e., $\vv^- > \vv^+$. Hence, in a smooth part of a traveling wave, $\vv$ must be increasing with $\chi$, and therefore a necessary condition for the existence of jamitons is that $U'(\vS)>m$ (see Fig.~\ref{fig:wv_diagram}). As a consequence, a jamiton solution $\vv(\chi)$ corresponds to a smooth movement along the function $r(\vv)$ in the $p$--$\vv$ plane into the positive $\vv$ direction, followed by a jump back from the right branch to the left branch along a horizontal line (see Fig.~\ref{fig:rv_diagram_jamiton}).
\begin{rem}
\label{rem:pw_model_shocks}
The PW model has been criticized \cite{Daganzo1995, AwRascle2000} for the fact that its solutions may involve shocks that travel faster than the vehicles. However, the arguments above point to the following interesting fact: while such unrealistic shocks can certainly be triggered by particular choices of initial conditions, they cannot occur as parts of traveling wave solutions (see also \cite{FlynnKasimovNaveRosalesSeibold2009}).
\end{rem}

\begin{figure}
\begin{minipage}[b]{.485\textwidth}
\includegraphics[width=\textwidth]{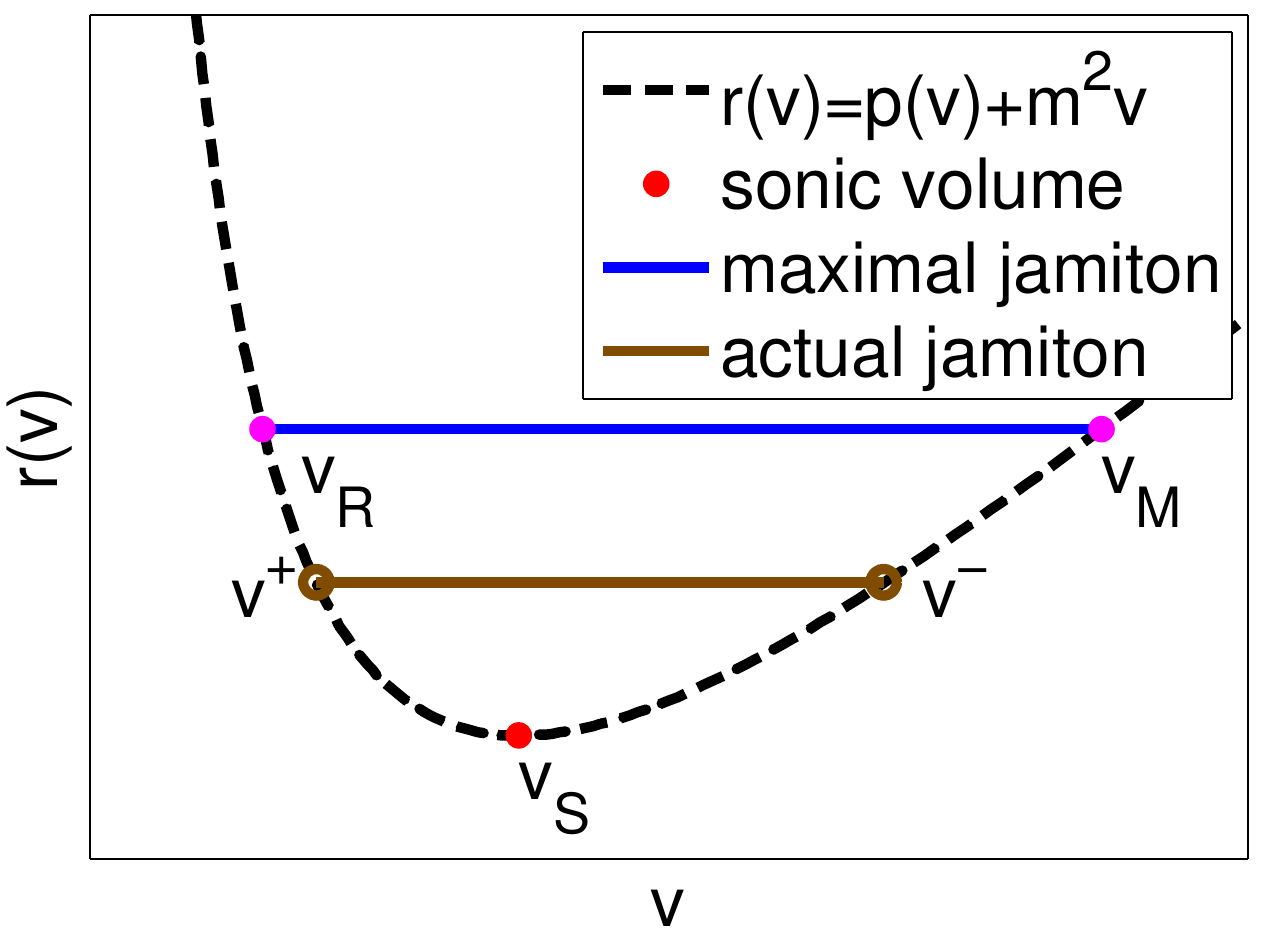}
\vspace{-1.8em}
\caption{A jamiton in the $r$--$\vv$ diagram in the $p$--$\vv$ plane. Below the maximal jamiton (blue line) lies an actual jamiton (brown line) that connects the states $\vv^+$ and $\vv^-$.}
\label{fig:rv_diagram_jamiton}
\end{minipage}
\hfill
\begin{minipage}[b]{.485\textwidth}
\includegraphics[width=\textwidth]{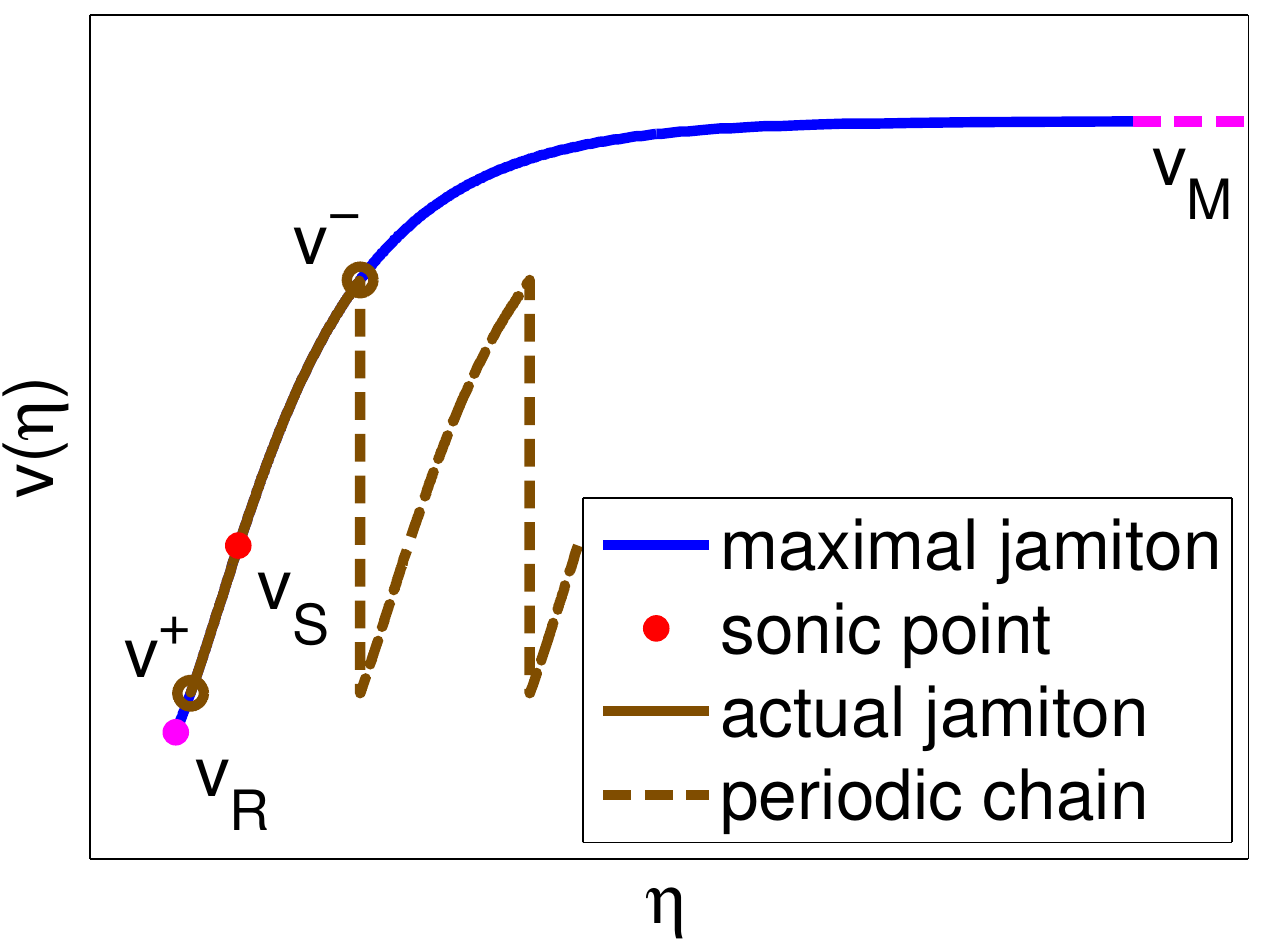}
\vspace{-1.8em}
\caption{Jamiton function $\vv(\eta)$. An actual jamiton (brown curve) between $\vv^+$ and $\vv^-$ lies on the maximal jamiton curve (blue), which connects $\vv_\text{R}$ and $\vv_\text{M}$, the latter being at $\eta\to\infty$.}
\label{fig:jamiton_curve}
\end{minipage}
\end{figure}

\subsubsection{Jamiton Construction}
\label{subsubsec:jamiton_construction}
Based on the theoretical presentation above, next we present a systematic process for the construction of jamiton solutions, with an intuitive graphical interpretation. The construction is as follows:
\begin{enumerate}[\quad 1.]
\item Select a sonic specific volume $\vS$.
\item Evaluate $m = \sqrt{-p'(\vS)}$ and $s = U(\vS)-m\vS$.
\item Draw the function $w(\vv) = U(\vv)-(m\vv+s)$ in the $u$--$\vv$ plane (see Fig.~\ref{fig:wv_diagram}).
\item If $w'(\vS)\le 0$, then no jamitons exist with this sonic value $\vS$. Otherwise, find the unique root $\vv_\text{M}>\vS$ such that $w(\vv_\text{M}) = 0$.
\item Draw the function $r(\vv) = p(\vv)+m^2\vv$ in the $p$--$\vv$ plane (see Fig.~\ref{fig:rv_diagram}), and define $r_\text{min} = r(\vS)$ and $r_\text{max} = r(\vv_\text{M})$.
\item Find the unique value $\vv_\text{R}<\vS$ that satisfies $r(\vv_\text{R}) = r_\text{max}$.
\newcounter{enumii_saved}
\setcounter{enumii_saved}{\value{enumi}}
\end{enumerate}
Since the jamiton ODE \eqref{eq:jamiton_ode} cannot be integrated beyond $\vv_\text{M}$, there is a \emph{maximal jamiton} (for this $\vS$) that connects the specific volumes $\vv_\text{M}$ and $\vv_\text{R}$, as depicted by the blue curve in Fig.~\ref{fig:jamiton_curve}. Note that this maximal jamiton is only of theoretical relevance because it is infinitely long. However, the above construction gives rise to a one-parameter family (for the specific $\vS$ at hand) of jamitons of finite length, whose smooth parts are comprised of the smooth part of the maximal jamiton, but which differ in the states that are connected by a shock. Specifically, the construction continues as follows:
\begin{enumerate}[\quad 1.]
\setcounter{enumi}{\value{enumii_saved}}
\item Select a value $\mathring{r}\in (r_\text{min},r_\text{max})$.
\item Determine $\vv^+<\vS$ and $\vv^->\vS$, such that $r(\vv^-) = \mathring{r} = r(\vv^+)$ (see Fig.~\ref{fig:rv_diagram_jamiton}).
\item Calculate desired properties of the jamiton, such as:
  \begin{itemize}
  \item Solve the jamiton ODE \eqref{eq:jamiton_ode} from $\vv^+$ to $\vv^-$ to plot the jamiton in the $\chi$ variable.
  \item Transform the jamiton into the Eulerian self-similar variable $\eta$, by using the relation
  \begin{equation*}
  \ud{\eta} = \tfrac{1}{\tau}(\ud{x}-s\ud{t})
  \overset{\eqref{eq:relation_v_u}}{=} \tfrac{1}{\tau}(\ud{x}-u\ud{t}+m\vv\ud{t})
  \overset{\eqref{eq:relation_sigma_x_t}}{=}
  \tfrac{1}{\tau}(\vv\ud{\sigma}+m\vv\ud{t})
  = \vv\ud{\chi}\;.
  \end{equation*}
  Figure~\ref{fig:jamiton_curve} shows the shape of a jamiton as a function $\vv(\eta)$, and Fig.~\ref{fig:jamiton_curve_rho_u} shows the corresponding jamiton profiles $\rho(\eta)$ and $u(\eta)$.
  \item Compute its length in the $\eta$ variable
  \begin{equation}
  \label{eq:jamiton_length}
  L = \eta(\vv^-)-\eta(\vv^+)
  = \int_{\vv^+}^{\vv^-} \eta'(\vv) \ud{\vv}
  = \int_{\vv^+}^{\vv^-} \vv\frac{r'(\vv)}{w(\vv)}\ud{\vv}
  \end{equation}
  and the number of vehicles in the jamiton (in the $\eta$ variable)
  \begin{equation}
  \label{eq:jamiton_mass}
  N = \int_{\eta^+}^{\eta^-} \rho(\eta) \ud{\eta}
  = \int_{\vv^+}^{\vv^-} \frac{\eta'(\vv)}{\vv} \ud{\vv}
  = \int_{\vv^+}^{\vv^-} \frac{r'(\vv)}{w(\vv)}\ud{\vv}
  \end{equation}
  by evaluating a definite integral. Of course, in line with having a complete continuum description, $N$ is not restricted to integer values. Moreover, in the physical road coordinate $x$, the jamiton length is $\tau L$, and the number of vehicles is $\tau N$.
  \end{itemize}
\end{enumerate}
As depicted through the dashed function in Fig.~\ref{fig:jamiton_curve}, a single jamiton can be continued into an infinite periodic chain of jamitons, all of which are connected through shocks. In the $r$--$\vv$ diagram shown in Fig.~\ref{fig:rv_diagram_jamiton}, this means that one moves continuously along the function $r(\vv)$ from $\vv^+$ to $\vv^-$, and then jumps back to $\vv^+$ via a shock.

In ZND detonation theory, the constant line whose intersection with the graph of $r(\vv)$ determines the two states $\vv^+$ and $\vv^-$, is called \emph{Rayleigh line}. As one can see in Fig.~\ref{fig:rv_diagram}, the function $r(\vv)$ is convex, and the sonic value $\vS$ is the unique point where its slope matches the slope of the Rayleigh line. The construction presented here leads us to the following
\begin{thm}
\label{thm:equivalence_jamitons_instability_payne_whitham_model}
For the PW model, the existence of jamitons with a sonic value $\vS$ is equivalent (with the exception of the degenerate case, see Rem.~\ref{rem:degenerate_SCC}) to the instability of the base state solution $\vv(x,t) = \vS$, $u(x,t) = U(\vS)$.
\end{thm}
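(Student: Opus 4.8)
The plan is to reduce both halves of the claimed equivalence to a single scalar inequality evaluated at the sonic value $\vS$, and then to invoke Whitham's theorem to identify instability of the base state with the strict failure of the sub-characteristic condition. The backbone of the argument is thus a chain of equivalences, with the genuine content concentrated in a change of variables between the Lagrangian description used in the jamiton construction and the Eulerian description used in~\eqref{eq:scc_payne_whitham}.

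First I would extract the existence criterion from Sect.~\ref{subsubsec:shocks}. A finite-length jamiton with sonic value $\vS$ exists precisely when the numerator $w(\vv)$ of the jamiton ODE~\eqref{eq:jamiton_ode} has a second root $\vv_\text{M}>\vS$, since this is what makes the admissible Rayleigh-line interval $(r_\text{min},r_\text{max})$ nonempty and, together with the Lax condition $\vv^->\vv^+$, permits the construction of Sect.~\ref{subsubsec:jamiton_construction}. As $w(\vS)=0$ and $w$ is concave (because $w''(\vv)=U''(\vv)<0$ by assumption~(a)), such a second root exists if and only if $w'(\vS)>0$. Using $w(\vv)=U(\vv)-(m\vv+s)$ together with~\eqref{eq:values_m_s_payne_whitham_model}, this reads
\begin{equation*}
w'(\vS) = \frac{\ud{U}}{\ud{\vv}}(\vS) - m > 0, \qquad m = \sqrt{-\frac{\ud{p}}{\ud{\vv}}(\vS)}\;.
\end{equation*}
Since both $\frac{\ud{U}}{\ud{\vv}}$ and $m$ are strictly positive by assumptions~(a) and~(b), squaring gives the equivalent criterion $\bigl(\frac{\ud{U}}{\ud{\vv}}(\vS)\bigr)^2 > -\frac{\ud{p}}{\ud{\vv}}(\vS)$.

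Next I would transfer this inequality to the density variable $\bar\rho = 1/\vS$. Because $\vv=1/\rho$ gives $\frac{\ud{}}{\ud{\vv}} = -\rho^2\,\frac{\ud{}}{\ud{\rho}}$, substituting $\frac{\ud{U}}{\ud{\vv}}=-\rho^2 U'(\rho)$ and $\frac{\ud{p}}{\ud{\vv}}=-\rho^2 p'(\rho)$ (derivatives now taken in $\rho$) and dividing by $\rho^2$ collapses the existence criterion to $(U'(\bar\rho))^2 > p'(\bar\rho)/\bar\rho^2$. This is exactly the strict negation of the sub-characteristic condition~\eqref{eq:scc_payne_whitham} for the base state of density $\bar\rho$. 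Invoking Whitham's theorem (Thm.~\ref{thm:whitham}), which identifies the SCC with linear stability, the strict failure of~\eqref{eq:scc_payne_whitham} is equivalent to linear instability of $(\vS,U(\vS))$. Chaining the equivalences---jamiton existence $\Leftrightarrow w'(\vS)>0 \Leftrightarrow$ strict violation of~\eqref{eq:scc_payne_whitham} $\Leftrightarrow$ instability---then closes the argument, the boundary case $w'(\vS)=0$ (equivalently $\mu=\lambda_1$, i.e.\ equality in the SCC) being precisely the degenerate situation set aside in Rem.~\ref{rem:degenerate_SCC}.

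I expect the main obstacle to be bookkeeping rather than conceptual: one must track the strict inequalities carefully through the Lagrangian-to-Eulerian change of variables so that the sign-reversing factor $-\rho^2$ is handled correctly in both the $U$ and $p$ derivatives, and so that the single degenerate case is consistently the one excluded on both sides of the equivalence. A secondary point worth stating explicitly is that the equivalence "jamiton exists $\Leftrightarrow w'(\vS)>0$" is genuinely an \emph{iff}: the forward implication follows from the necessary conditions in Sect.~\ref{subsubsec:shocks}, while the reverse requires noting that $w'(\vS)>0$ makes the construction of Sect.~\ref{subsubsec:jamiton_construction} actually produce admissible jamitons (nonempty Rayleigh interval together with an entropy-admissible shock).
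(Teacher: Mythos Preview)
Your proposal is correct and follows essentially the same route as the paper's own proof: identify the jamiton existence criterion as $w'(\vS)>0$ (equivalently $U'(\vS)>m$ in the Lagrangian variable), recognize this as the strict complement of the SCC~\eqref{eq:scc_payne_whitham}, and appeal to the construction in Sect.~\ref{subsubsec:jamiton_construction} for sufficiency. The paper's proof is terser---it simply asserts that ``this condition is the complement of the stability condition~\eqref{eq:scc_payne_whitham}'' without writing out the Lagrangian-to-Eulerian change of variables---but your explicit bookkeeping of the $-\rho^2$ factors and your careful separation of the two implications are exactly the details that justify that assertion.
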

\begin{proof}
As argued in Sect.~\ref{subsubsec:shocks}, a necessary condition for the existence of jamitons is that $U'(\vS)>m$, where $m = \sqrt{-p'(\vS)}$. This condition is the complement of the stability condition \eqref{eq:scc_payne_whitham}. Moreover, the construction in Sect.~\ref{subsubsec:jamiton_construction} shows that for every linearly unstable value $\vS$, one can in fact construct jamitons that have this sonic value $\vS$.
\end{proof}

\subsection{Inhomogeneous Aw-Rascle-Zhang Model}
\label{subsec:jamiton_construction_aw_rascle_zhang_model}
The construction of jamitons for the ARZ model is very similar to the construction for the PW model. Again, we seek solutions in the self-similar variable $\chi = \frac{mt+\sigma}{\tau}$. In Lagrangian variables, the ARZ model \eqref{eq:aw_rascle_zhang_model_lagrangian} differs from the PW model \eqref{eq:payne_whitham_model_lagrangian} only in the fact that $(p(\vv))_\sigma$ is replaced by $(h(\vv))_t$. Therefore, equation \eqref{eq:payne_whitham_model_lagrangian_self_similar_1} carries over, and thus relation \eqref{eq:relation_v_u} holds as well. Equation \eqref{eq:payne_whitham_model_lagrangian_self_similar_2} is replaced by
\begin{equation}
\label{eq:aw_rascle_zhang_model_lagrangian_self_similar_2}
\tfrac{m}{\tau}u'(\chi) + h'(\vv(\chi))\tfrac{m}{\tau}\vv'(\chi) = \tfrac{1}{\tau}\prn{U(\vv(\chi))-u(\chi)}\;,
\end{equation}
and hence the jamiton ODE \eqref{eq:jamiton_ode} carries over, with the only modification that the function $r(\vv)$ now reads as
\begin{equation}
\label{eq:definition_r}
r(\vv) = mh(\vv)+m^2\vv\;.
\end{equation}
Thus, the condition for the vanishing of the denominator in \eqref{eq:jamiton_ode} is now $h'(\vS) = -m$, and therefore for a given sonic value $\vS$, we obtain
\begin{equation}
\label{eq:values_m_s_aw_rascle_zhang_model}
m = -h'(\vS) \quad\text{and}\quad s = U(\vS)-m\vS\;.
\end{equation}
For shocks in the ARZ model, the Rankine-Hugoniot conditions \eqref{eq:aw_rascle_zhang_model_rankine_hugoniot_conditions} yield
\begin{equation*}
m\brk{\vv} + \brk{h(\vv)} = 0\;.
\end{equation*}
Thus, with $r(\vv)$ defined as in \eqref{eq:definition_r}, the same condition $r(\vv^-) = r(\vv^+)$ as in the PW model applies. The arguments involving the Lax entropy condition transfer exactly from the PW model, as do the considerations regarding roots of $w(\vv)$, as well as the jamiton construction principle. We can therefore formulate
\begin{thm}
\label{thm:equivalence_jamitons_instability_aw_rascle_zhang_model}
For the ARZ model, the existence of jamitons with a sonic value $\vS$ is equivalent (with the exception of the degenerate case, see Rem.~\ref{rem:degenerate_SCC}) to the instability of the base state solution $\vv(x,t) = \vS$, $u(x,t) = U(\vS)$.
\end{thm}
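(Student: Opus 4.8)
The plan is to mirror the proof of Thm.~\ref{thm:equivalence_jamitons_instability_payne_whitham_model} essentially verbatim, exploiting the fact, already established in the preceding discussion, that the jamiton ODE \eqref{eq:jamiton_ode}, the role of the numerator $w(\vv) = U(\vv)-(m\vv+s)$, the Lax entropy argument of Sect.~\ref{subsubsec:shocks}, and the construction of Sect.~\ref{subsubsec:jamiton_construction} all carry over from the PW to the ARZ model. The only substantive differences are that now $m = -h'(\vS)$ in place of $m = \sqrt{-p'(\vS)}$, and that $r(\vv)$ is redefined by \eqref{eq:definition_r} as $r(\vv) = mh(\vv)+m^2\vv$.

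First I would recall that, since $w(\vv)$ is unchanged, the analysis of its roots transfers: the Lax condition forces $\vv$ to increase across the smooth part of the wave, so a genuine jamiton (one that terminates in a shock rather than running off to an extremal state) requires a second root $\vv_\text{M}>\vS$ of $w$, which in turn requires $w'(\vS) = U'(\vS)-m > 0$, i.e.\ $U'(\vS)>m$. Here the derivatives are taken with respect to $\vv$.

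Next I would translate this necessary condition into the stability condition \eqref{eq:scc_aw_rascle_zhang}. Substituting $m = -h'(\vS)$ gives $U'(\vS) > -h'(\vS)$, i.e.\ $U'(\vS)+h'(\vS)>0$ in the $\vv$-variable. Converting to the density variable via $\vv = 1/\rho$, so that $\frac{\ud{U}}{\ud{\vv}} = -\rho^2 U'(\rho)$ and similarly for $h$, this becomes $-\rho^2(U'(\rho)+h'(\rho))>0$, equivalently $h'(\rho) < -U'(\rho)$, which is precisely the negation of the SCC \eqref{eq:scc_aw_rascle_zhang}, and hence, by Thm.~\ref{thm:whitham}, the instability of the base state $\vv(x,t)=\vS$, $u=U(\vS)$. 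For the converse direction I would invoke the construction of Sect.~\ref{subsubsec:jamiton_construction}: whenever $U'(\vS)>m$ holds, the root $\vv_\text{M}>\vS$ exists, the convexity of $r(\vv)$ (guaranteed by the assumptions on $h$ in Sect.~\ref{subsec:assumptions_U_p_h}, since $r''(\vv) = mh''(\vv)>0$) produces a matching $\vv_\text{R}<\vS$, and selecting any $\mathring{r}\in(r_\text{min},r_\text{max})$ yields a bona fide finite-length jamiton with sonic value $\vS$.

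The main obstacle I anticipate is bookkeeping rather than conceptual. One must track the two redefinitions ($m = -h'(\vS)$ and the new $r$) carefully and, above all, handle the change of variables between $\vv$ and $\rho$ correctly, so that the sign of $\frac{\ud{}}{\ud{\vv}}$ versus $\frac{\ud{}}{\ud{\rho}}$ does not flip and the instability condition lines up exactly with \eqref{eq:scc_aw_rascle_zhang}. I would also verify that $r(\vv) = mh(\vv)+m^2\vv$ is indeed convex with its unique minimum at $\vS$ (where $h'(\vS) = -m$), so that the Rayleigh-line construction is well-posed. With these checks in place the argument is a direct transcription of the PW case, excluding as always the degenerate equality $U'(\vS)=m$ noted in Rem.~\ref{rem:degenerate_SCC}.
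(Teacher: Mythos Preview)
Your proposal is correct and follows essentially the same approach as the paper's own proof, which is even terser: it simply states that the existence condition $U'(\vS)>m$ with $m=-h'(\vS)$ is the complement of the stability condition \eqref{eq:scc_aw_rascle_zhang}, and that the remaining arguments carry over verbatim from Thm.~\ref{thm:equivalence_jamitons_instability_payne_whitham_model}. Your explicit change of variables from $\vv$ to $\rho$ and the verification that $r''(\vv)=mh''(\vv)>0$ are helpful elaborations but not additions of substance.
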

\begin{proof}
The condition for the existence of jamitons is that $U'(\vS)>m$, where $m = -h'(\vS)$. This condition is the complement of the stability condition \eqref{eq:scc_aw_rascle_zhang}. The remaining arguments are the same as in the proof of Thm.~\ref{thm:equivalence_jamitons_instability_payne_whitham_model}.
\end{proof}
Note that for the ARZ model, the connection between the existence of traveling wave solutions and the instability of base states was observed by Greenberg \cite{Greenberg2004}, albeit without drawing connections to ZND theory, using the SCC, or exploiting the geometric interpretations thereof.

\vspace{1.5em}
\section{Jamitons in the Fundamental Diagram}
\label{sec:jamitons_fundamental_diagram}
We now use the explicit constructions of jamiton solutions of the PW model and the ARZ model, presented in Sect.~\ref{sec:jamiton_construction}, to generate set-valued fundamental diagrams of traffic flow. To do so, we consider the solutions of \eqref{eq:payne_whitham_model} or \eqref{eq:aw_rascle_zhang_model}, when initializing the model with constant base state initial conditions $(\rho,U(\rho))$ plus a small perturbation. For each value of $\rho$, for which condition \eqref{eq:scc_payne_whitham} or \eqref{eq:scc_aw_rascle_zhang}, respectively, is satisfied, the solution evolves towards the constant state with density $\rho$. In contrast, for each value of $\rho$, for which the SCC is violated, the second order model \eqref{eq:payne_whitham_model} or \eqref{eq:aw_rascle_zhang_model}, respectively, converges to a jamiton-dominated state. As explained in Sect.~\ref{sec:stability}, the former statement follows from known rigorous results for second order traffic flow models with relaxation terms, while the latter statement is based on numerical experiments.

In order to construct a fundamental diagram, we start with a plot of the equilibrium curve of traffic flow
\begin{equation*}
Q_\text{eq}(\rho) = \rho U(\rho)
\end{equation*}
in the flow rate vs.\ density plane. For every value of $\rho$ that satisfies the SCC, we can record a point $(\rho,Q_\text{eq}(\rho))$ on this equilibrium curve, representing the stable base state solution. In turn, for every value of $\rho$ that violates the SCC, we sketch a jamiton line segment, which is defined as follows.

\begin{figure}
\begin{minipage}[b]{.485\textwidth}
\includegraphics[width=\textwidth]{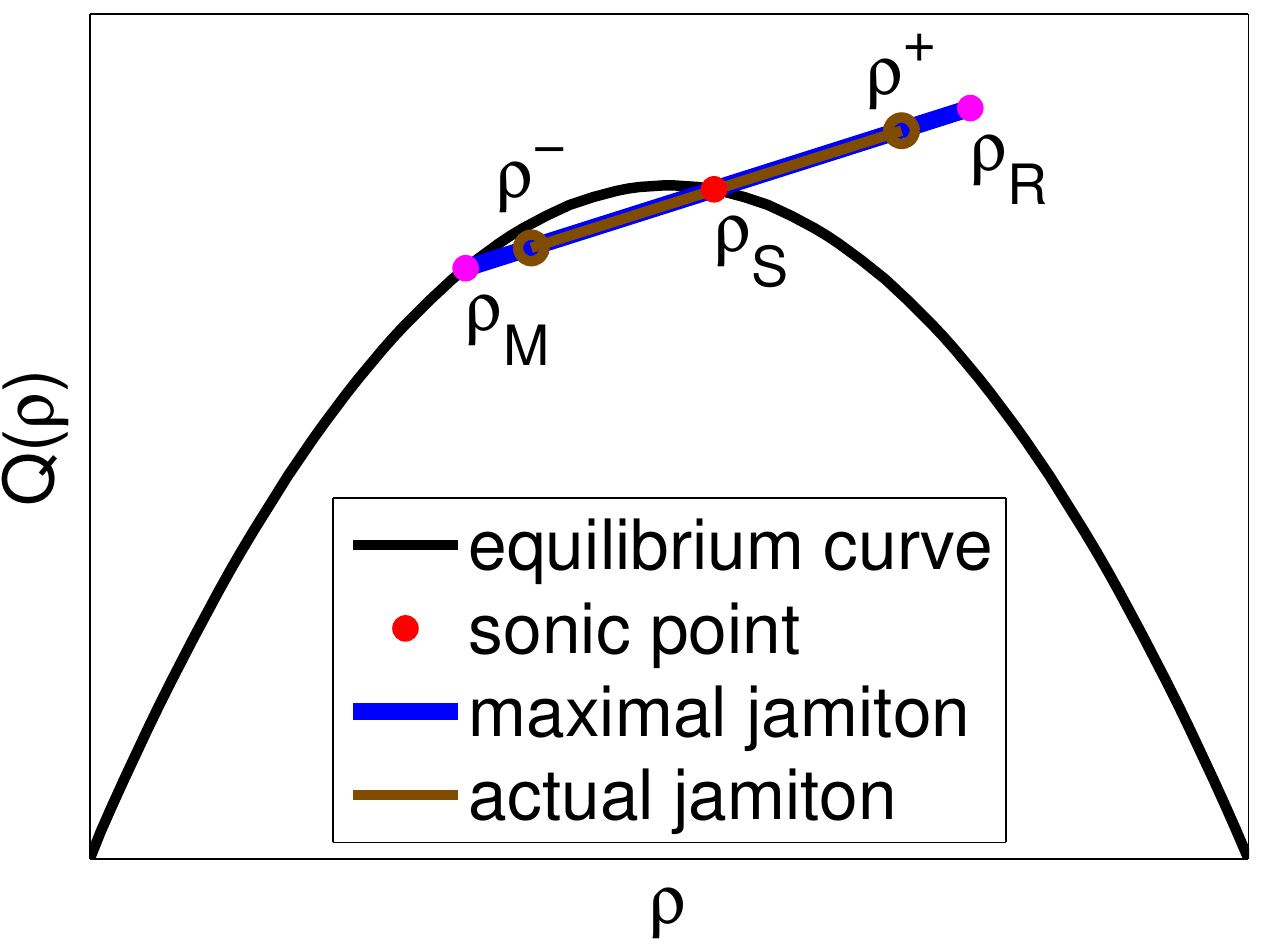}
\vspace{-1.8em}
\caption{The maximal jamiton (blue line) and an actual jamiton (brown line) in the $Q$--$\rho$ plane. Both jamitons lie on the same straight line that goes through the sonic point $(\rho_\text{S},Q(\rho_\text{S}))$.}
\label{fig:qrho_diagram}
\end{minipage}
\hfill
\begin{minipage}[b]{.485\textwidth}
\includegraphics[width=\textwidth]{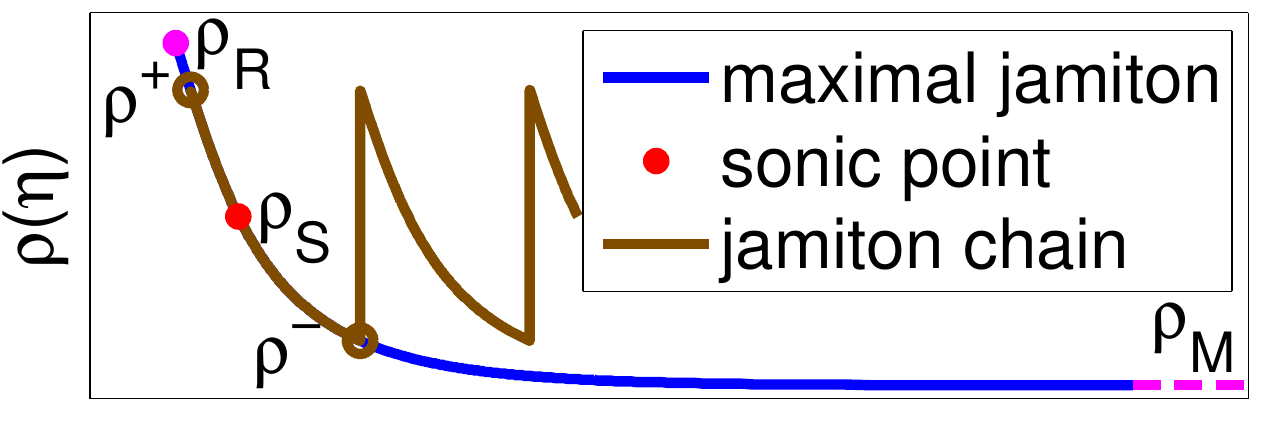} \\[.08em]
\includegraphics[width=\textwidth]{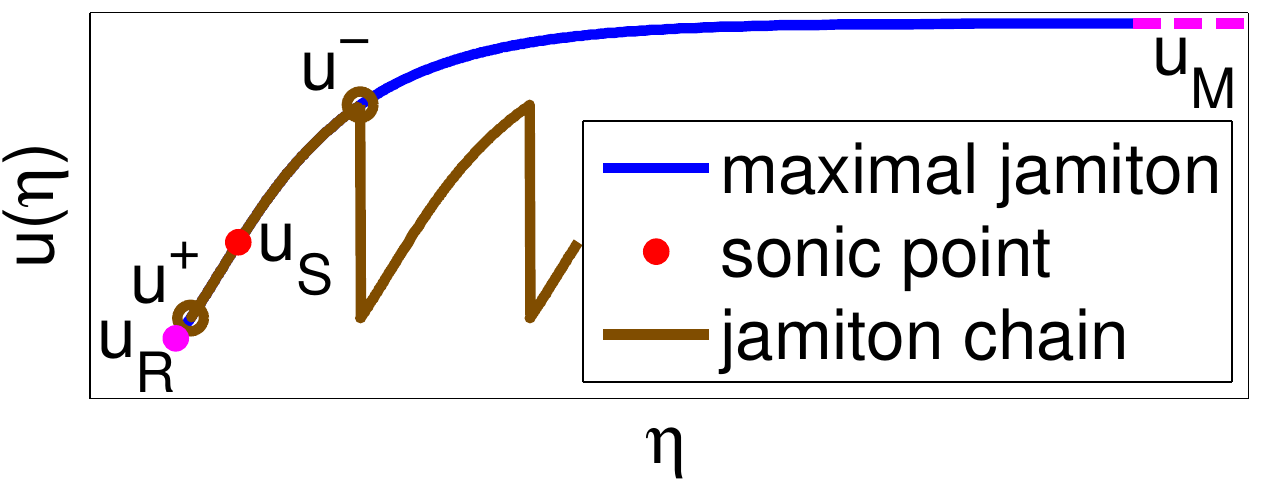}
\vspace{-1.8em}
\caption{Jamiton profiles in Eulerian variables: density $\rho(\eta)$ (top) and velocity $u(\eta)$ (bottom). As in Fig.~\ref{fig:jamiton_curve}, the periodic chain of jamitons (brown) follows the maximal jamiton curve (blue).}
\label{fig:jamiton_curve_rho_u}
\end{minipage}
\end{figure}

As derived in Sect.~\ref{sec:jamiton_construction}, a jamiton solution is non-constant in $\rho$ and $u$, and therefore it does not represent a single point in the flow rate vs.\ density plane. Instead, due to relation \eqref{eq:relation_v_u}, which implies that $u = \frac{m}{\rho}+s$, each jamiton is given by a segment of the line
\begin{equation*}
Q = m+s\rho
\end{equation*}
in the flow rate vs.\ density plane. An example of such a jamiton line is depicted in Fig.~\ref{fig:qrho_diagram}, with the corresponding jamiton profiles in Eulerian variables ($\rho(\eta)$ and $u(\eta)$) depicted in Fig.~\ref{fig:jamiton_curve_rho_u}. Due to the Chapman-Jouguet condition \eqref{eq:chapman_jouguet_condition}, every jamiton goes through the equilibrium curve $Q_\text{eq}(\rho)$ at the sonic density $\rho_\text{S}$. For each choice of sonic density $\rho_\text{S}$ (that violates the SCC), the corresponding maximal jamiton is represented by a line segment between $(\rho_\text{M},m+s\rho_\text{M})$ and $(\rho_\text{R},m+s\rho_\text{R})$, where $m$, $s$, $\rho_\text{M} = 1/\vv_\text{M}$, and $\rho_\text{R} = 1/\vv_\text{R}$ are constructed in the first 6 steps presented in Sect.~\ref{subsubsec:jamiton_construction}. Since $w(\vv_\text{M}) = 0$, where $w(\vv)$ is defined by \eqref{eq:definition_w_r}, it follows that $\rho_\text{M} U(\rho_\text{M}) = m+s\rho_\text{M}$, i.e., the point $(\rho_\text{M},m+s\rho_\text{M})$ lies on the equilibrium curve $Q_\text{eq}(\rho)$ as well. Because $Q_\text{eq}(\rho)$ is assumed concave, the maximal jamiton line therefore lies below the equilibrium curve for densities $\rho_\text{M}<\rho<\rho_\text{S}$, and above the equilibrium curve for densities $\rho_\text{S}<\rho\le\rho_\text{R}$.

Moreover, the slope of each jamiton line, $s$, equals the propagation speed of the jamiton in a stationary frame of reference. In particular, if $s>0$, the jamiton is moving forward, and if $s<0$, the jamiton is moving backward, relative to a stationary observer. Here, the former case is associated with light traffic and the latter case with heavy traffic, as formalized by the following theorem, which provides information about the structural properties of a jamiton FD.
\begin{thm}
\label{thm:properties_jamiton_FD}
For both the PW and the ARZ model, the following two properties hold:
\begin{enumerate}[(i)]
\item
The jamiton velocity $s$ decreases with the sonic density $\rho_\text{S}$.
\item
At the transition points between stability and jamiton solutions, the jamiton lines are parallel to the equilibrium curve.
\end{enumerate}
\end{thm}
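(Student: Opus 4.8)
The plan is to treat both claims as short computations carried out in the Lagrangian specific-volume frame, parametrizing every quantity by the sonic value $\vS$ and converting to the sonic density $\rho_\text{S} = 1/\vS$ only at the very end. Throughout I work with the hatted, $\vv$-based functions $U$, $p$, $h$ and invoke the assumptions of Sect.~\ref{subsec:assumptions_U_p_h}: $U'>0$, $U''<0$, $p'<0$, $p''>0$, $h'<0$, $h''>0$, where a prime denotes $\tfrac{d}{d\vv}$. The two models are handled in parallel, since they differ only through the expression for $m(\vS)$ (namely $m = \sqrt{-p'(\vS)}$ for PW and $m = -h'(\vS)$ for ARZ), while the jamiton velocity is $s(\vS) = U(\vS) - m(\vS)\,\vS$ in both cases.

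For part (i), I regard $s$ as a function of $\vS$ alone and differentiate:
\[
\frac{ds}{d\vS} = \bigl(U'(\vS) - m\bigr) - \vS\,\frac{dm}{d\vS}\,.
\]
A one-line computation gives $\frac{dm}{d\vS} = -\tfrac{p''(\vS)}{2m} < 0$ for PW and $\frac{dm}{d\vS} = -h''(\vS) < 0$ for ARZ, so that $-\vS\,\frac{dm}{d\vS} > 0$ in both cases by the convexity assumptions $p''>0$, resp.\ $h''>0$. The decisive ingredient is then that $U'(\vS) - m > 0$; this is exactly the jamiton-existence inequality established in the proofs of Thms.~\ref{thm:equivalence_jamitons_instability_payne_whitham_model} and~\ref{thm:equivalence_jamitons_instability_aw_rascle_zhang_model}, and it holds precisely on the range of sonic values for which the statement is meaningful. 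Adding the two nonnegative contributions yields $\frac{ds}{d\vS} > 0$, and since $\rho_\text{S} = 1/\vS$ is strictly decreasing in $\vS$, the chain rule converts this directly into $\frac{ds}{d\rho_\text{S}} < 0$, as claimed.

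For part (ii), I evaluate at a transition point, i.e.\ the boundary of the unstable region, where the jamiton-existence inequality degenerates to the equality $U'(\vS) = m$ (the degenerate SCC excluded in Rem.~\ref{rem:degenerate_SCC}). The goal is to match the jamiton-line slope $s$ with the slope of the equilibrium curve $Q_\text{eq}(\rho) = \rho U(\rho)$ at $\rho_\text{S}$, namely $U(\rho_\text{S}) + \rho_\text{S}\,\tfrac{dU}{d\rho}(\rho_\text{S})$. The only genuine step is the chain-rule conversion between the two frames, $\tfrac{dU}{d\rho}(\rho_\text{S}) = -\vS^2\,U'(\vS)$; substituting the transition condition $U'(\vS) = m$ and using $\rho_\text{S}\vS = 1$ collapses the equilibrium slope to $U(\rho_\text{S}) - m\vS$. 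Since $U(\vS) = U(\rho_\text{S})$, this is exactly $s = U(\vS) - m\vS$, so the two slopes coincide and the lines are parallel. (In fact the jamiton line also passes through $(\rho_\text{S},Q_\text{eq}(\rho_\text{S}))$ by the Chapman-Jouguet condition, so at the transition it is tangent to $Q_\text{eq}$, a slightly stronger statement than parallelism.)

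Neither part hides a deep difficulty; the computations are elementary once the two variable frames are kept straight, which is where essentially all the care is needed. The closest thing to an obstacle is the observation in part (i) that the formula for $s(\vS)$ does not by itself fix the sign of $\frac{ds}{d\vS}$ — monotonicity genuinely requires importing the instability inequality $U'(\vS) > m$ from Thms.~\ref{thm:equivalence_jamitons_instability_payne_whitham_model} and~\ref{thm:equivalence_jamitons_instability_aw_rascle_zhang_model}, so that the result is asserted exactly on the jamiton regime and nowhere else.
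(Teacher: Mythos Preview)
Your proposal is correct and follows essentially the same argument as the paper: for (i) you differentiate $s(\vS)$, use the jamiton-existence inequality $U'(\vS)>m$ together with $m'(\vS)<0$ (from $p''>0$ resp.\ $h''>0$) to get $s'(\vS)>0$, and then invert to $\rho_\text{S}$; for (ii) you substitute the degenerate condition $U'(\vS)=m$ into the formula for $s$ and match it with $Q_\text{eq}'(\rho_\text{S})$. The only cosmetic difference is that the paper carries out part (ii) directly in Eulerian variables, whereas you stay in Lagrangian variables and convert via the chain rule $\tfrac{dU}{d\rho}=-\vS^2 U'(\vS)$; the computations are line-for-line equivalent.
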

\begin{proof}
For (i), we show equivalently that $s$ increases with the sonic specific volume $\vS$. Due to Thm.~\ref{thm:equivalence_jamitons_instability_payne_whitham_model} and Thm.~\ref{thm:equivalence_jamitons_instability_aw_rascle_zhang_model}, jamitons exist when $U'(\vS)-m(\vS)>0$. Due to \eqref{eq:values_m_s_payne_whitham_model} and \eqref{eq:values_m_s_aw_rascle_zhang_model}, we have that $s(\vS) = U(\vS)-m(\vS)\vS$. Hence
\begin{equation*}
s'(\vS) = U'(\vS)-m(\vS)-m'(\vS)\vS > -m'(\vS)\vS\;.
\end{equation*}
Moreover, since we assume that $p''(\vv)>0$ for the PW model, and $h''(\vv)>0$ for the ARZ model, we find that $m'(\vS)<0$ and therefore $s'(\vS)>0$.

To show (ii), we note that due to Thm.~\ref{thm:equivalence_jamitons_instability_payne_whitham_model} and Thm.~\ref{thm:equivalence_jamitons_instability_aw_rascle_zhang_model}, the boundary between stability and jamitons occurs, in Eulerian variables, when $-U'(\rho_\text{S})\rho_\text{S}^2 = m(\rho_\text{S})$. Hence
\begin{equation*}
s(\rho_\text{S}) = U(\rho_\text{S})-\tfrac{m(\rho_\text{S})}{\rho_\text{S}}
= U(\rho_\text{S})+U'(\rho_\text{S})\rho_\text{S}
= Q'(\rho_\text{S})\;,
\end{equation*}
which proves the claim.
\end{proof}

\begin{rem}
It should be emphasized that jamitons can travel forward ($s>0$) or backward ($s<0$) on the road, or be stationary ($s=0$). As Thm.~\ref{thm:properties_jamiton_FD} states, the larger the traffic density, the smaller $s$. An extreme case is the one of true stop-and-go waves, in which vehicles come to a complete stop. These waves, visible for instance in \cite{VideoTrafficWaves}, always travel backwards on the road. However, waves with $s>0$ can also be observed, albeit less pronounced than true stop-and-go waves. Instances of such forward traveling jamitons can be seen in the NGSIM trajectory data \cite{TrafficNGSIM} of the I-80 during the interval 4:00pm--4:15pm.
\end{rem}

\subsection{List of Examples}
\label{subsec:examples}
Before presenting the construction of jamiton FDs (and extensions thereof, see below) in detail, we first collect the parameters of the examples that we shall use as test cases. In all examples, we consider $\rho_\text{max} = 1/7.5\text{m}$ and $u_\text{max} = 20\text{m}/\text{s}$, except when explicitly noted otherwise. Specifically, we consider the following four scenarios:
\begin{itemize}
\item[\textbf{PW1}]
The PW model with a quadratic equilibrium curve, i.e., a linear desired velocity function \eqref{eq:desired_velocity_linear}, and a singular pressure of the form \eqref{eq:logarithmic_pressure}, with $\beta = 4.8\text{m}/\text{s}^2$. This model is used in Figs.~\ref{fig:fd_pw_uquad_plog_00}--\ref{fig:fd_pw_uquad_plog_inf}.
\item[\textbf{PW2}]
The PW model with a non-quadratic concave equilibrium curve of the form
\begin{equation}
\label{eq:smoothed_Newell_Daganzo_flux}
Q(\rho) = c\prn{g(0)+(g(1)-g(0))\rho/\rho_\text{max}
-g(\rho/\rho_\text{max})}\;,
\end{equation}
where the choice of the function $g(y) = \sqrt{1+(\tfrac{y-b}{\lambda})^2}$ has the effect that \eqref{eq:smoothed_Newell_Daganzo_flux} can be interpreted as a smooth version of the piecewise-linear Newell-Daganzo flux \cite{Newell1993, Daganzo1994}. The parameters are chosen such that a reasonable fit with the measurement data, shown in Fig.~\ref{fig:fd_data}, is achieved: $c = 0.078\rho_\text{max}u_\text{max}$, $b =
\frac{1}{3}$, and $\lambda = \frac{1}{10}$. Again, the logarithmic pressure \eqref{eq:logarithmic_pressure} is chosen, this time with $\beta = 8\text{m}/\text{s}^2$. This model is used in Figs.~\ref{fig:fd_pw_ukink_plog_00}--\ref{fig:fd_pw_ukink_plog_inf}.
\item[\textbf{ARZ1}]
The ARZ model with the same equilibrium curve \eqref{eq:smoothed_Newell_Daganzo_flux}, and with the singular hesitation function \eqref{eq:BDDR_hesitation}, which was suggested in \cite{BerthelinDegondDelitalaRascle2008}, with $\gamma = \frac{1}{2}$ and $\beta = 8\text{m}/\text{s}^2$. This model is used in Figs.~\ref{fig:fd_ar_ukink_pbddr_00}--\ref{fig:fd_ar_ukink_pbddr_inf}.
\item[\textbf{ARZ2}]
The ARZ model with the equilibrium curve \eqref{eq:smoothed_Newell_Daganzo_flux}, and with the singular hesitation function
\begin{equation*}
h(\rho) = \beta\frac{\prn{\rho/\rho_\text{max}}^{\gamma_1}}
{\prn{1-\rho/\rho_\text{max}}^{\gamma_2}}\;.
\end{equation*}
where $\gamma_1 = \frac{1}{5}$, $\gamma_2 = \frac{1}{10}$, and $\beta = 12\text{m}/\text{s}^2$. This generalization of \eqref{eq:BDDR_hesitation} yields a better qualitative fit of the unstable regime with measurement data (see Sect.~\ref{sec:mimicking_measurement_data}). This model is used in Fig.~\ref{fig:fd_ar_with_data}.
\end{itemize}
Note that some of the model parameters ($\rho_\text{max}$, $u_\text{max}$) are chosen to be within ranges of values that are realistic for real traffic, some ($c$, $b$, $\lambda$) to match specific data, and some ($\beta$, $\gamma_i$) to yield reasonably sized jamiton regions.

\subsection{Maximal Jamiton Envelopes}
\label{subsec:fundamental_diagram_maximal_jamiton}
The relations described in the beginning of this section give rise to the following construction of a set-valued fundamental diagram for the PW or the ARZ model, defined by a desired velocity function $U(\rho)$ and the traffic pressure $p(\rho)$ or the hesitation function $h(\rho)$, respectively.

Loop over all densities $0\le \rho \le \rho_\text{max}$. For each density $\rho$:
\begin{enumerate}[\quad 1.]
\item Check whether the SCC is satisfied. If it is, draw the equilibrium point $(\rho,Q_\text{eq}(\rho))$.
\item If it is not, compute $m$, $s$, $\rho_\text{M}$, and $\rho_\text{R}$ (see Sect.~\ref{subsubsec:jamiton_construction}), and plot a straight line between the points $(\rho_\text{M},m+s\rho_\text{M})$ and $(\rho_\text{R},m+s\rho_\text{R})$.
\end{enumerate}
An example of such a \emph{jamiton fundamental diagram} is shown in Fig.~\ref{fig:fd_pw_uquad_plog_00}. It consists of segments of the equilibrium curve (wherever $\rho$ satisfies the SCC; shown in black), and a set-valued region in the flow rate vs.\ density plane, in which the equilibrium curve lies (for all $\rho$ that violate the SCC). We call this set-valued region the \emph{jamiton region}, which is formed by a continuum of jamiton line segments, shown in blue in all figures that present a jamiton FD. The boundaries of the jamiton region are formed by an upper envelope curve above the equilibrium curve and a lower envelope curve below the equilibrium curve, both of which are depicted in magenta.

Theorem~\ref{thm:properties_jamiton_FD}, together with the concavity of $Q(\rho)$, implies that different jamiton lines cannot intersect above the equilibrium curve. Hence, the upper envelope of the jamiton region is a curve that is formed by the points $(\rho_\text{R},m+s\rho_\text{R})$, where $m$, $s$, and $\rho_\text{R}$ are parameterized by $\rho_\text{S}$. In contrast, different jamiton lines do intersect below the equilibrium curve. Therefore, the lower envelope of the jamiton region is formed by intersection points of jamiton lines. In Eulerian variables, this curve, wherever it is below the equilibrium curve, is given by $(\rho^*,Q^*)$, where
\begin{equation*}
\rho^*(\rho_\text{S}) = -\frac{m'(\rho_\text{S})}{s'(\rho_\text{S})}
\quad\text{and}\quad
Q^*(\rho_\text{S}) = m(\rho_\text{S})+s(\rho_\text{S})\rho^*(\rho_\text{S})
\end{equation*}
represent the parametrization by $\rho_\text{S}$.

This construction of a maximal jamiton FD is applied to the test cases PW1, PW2, and AR1, described in Sect.~\ref{subsec:examples}. The results are shown in Figs.~\ref{fig:fd_pw_uquad_plog_00},~\ref{fig:fd_pw_ukink_plog_00}, and~\ref{fig:fd_ar_ukink_pbddr_00}. One can clearly observe the structural properties described above. In comparison with the FD obtained from measurement data (see Fig.~\ref{fig:fd_data}), one particular disagreement is apparent: in the jamiton FD, the upper envelope curve extends much further than it is observed in the data FD. This disagreement can both be explained and remedied, as done below.

\begin{figure}
\begin{minipage}[b]{.485\textwidth}
\includegraphics[width=\textwidth]{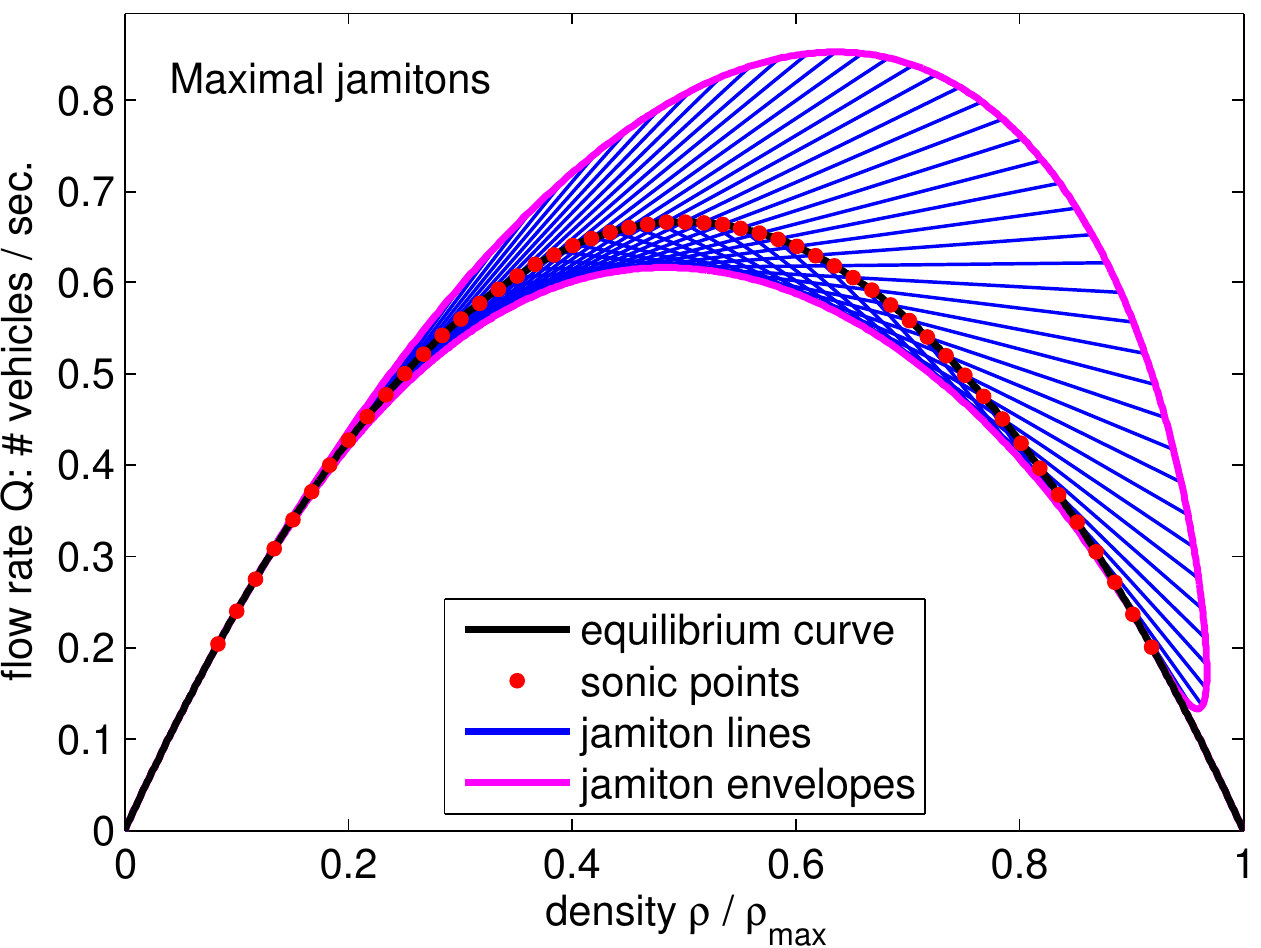}
\vspace{-1.8em}
\caption{Maximal jamiton FD (i.e., no temporal aggregation) for the example \textbf{PW1} (see Sect.~\ref{subsec:examples}).}
\label{fig:fd_pw_uquad_plog_00}
\end{minipage}
\hfill
\begin{minipage}[b]{.485\textwidth}
\includegraphics[width=\textwidth]{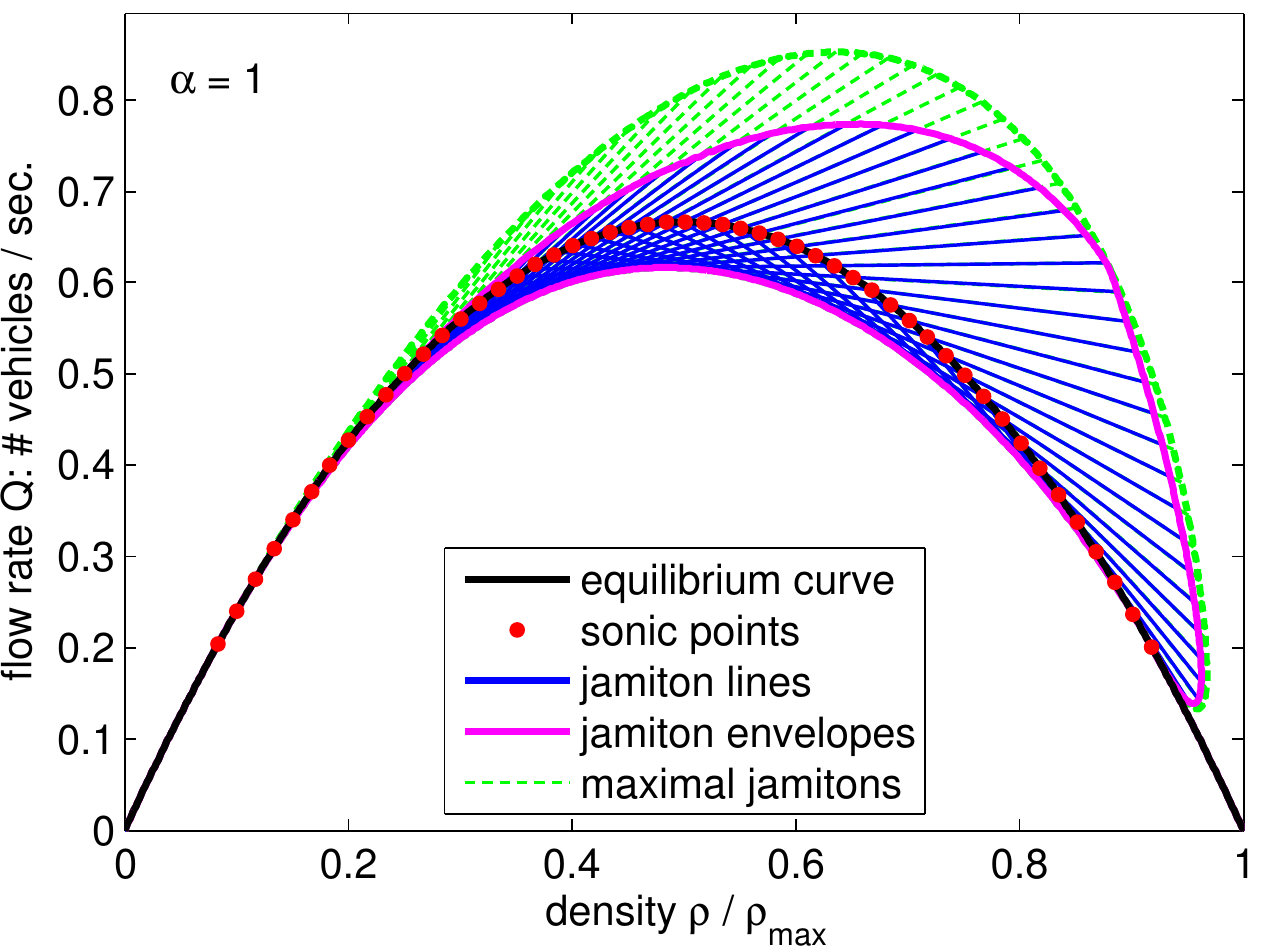}
\vspace{-1.8em}
\caption{Jamiton FD with short aggregation time $\Delta t = \tau$ for the example \textbf{PW1} (see Sect.~\ref{subsec:examples}). \\}
\label{fig:fd_pw_uquad_plog_01}
\end{minipage}

\vspace{.5em}
\begin{minipage}[b]{.485\textwidth}
\includegraphics[width=\textwidth]{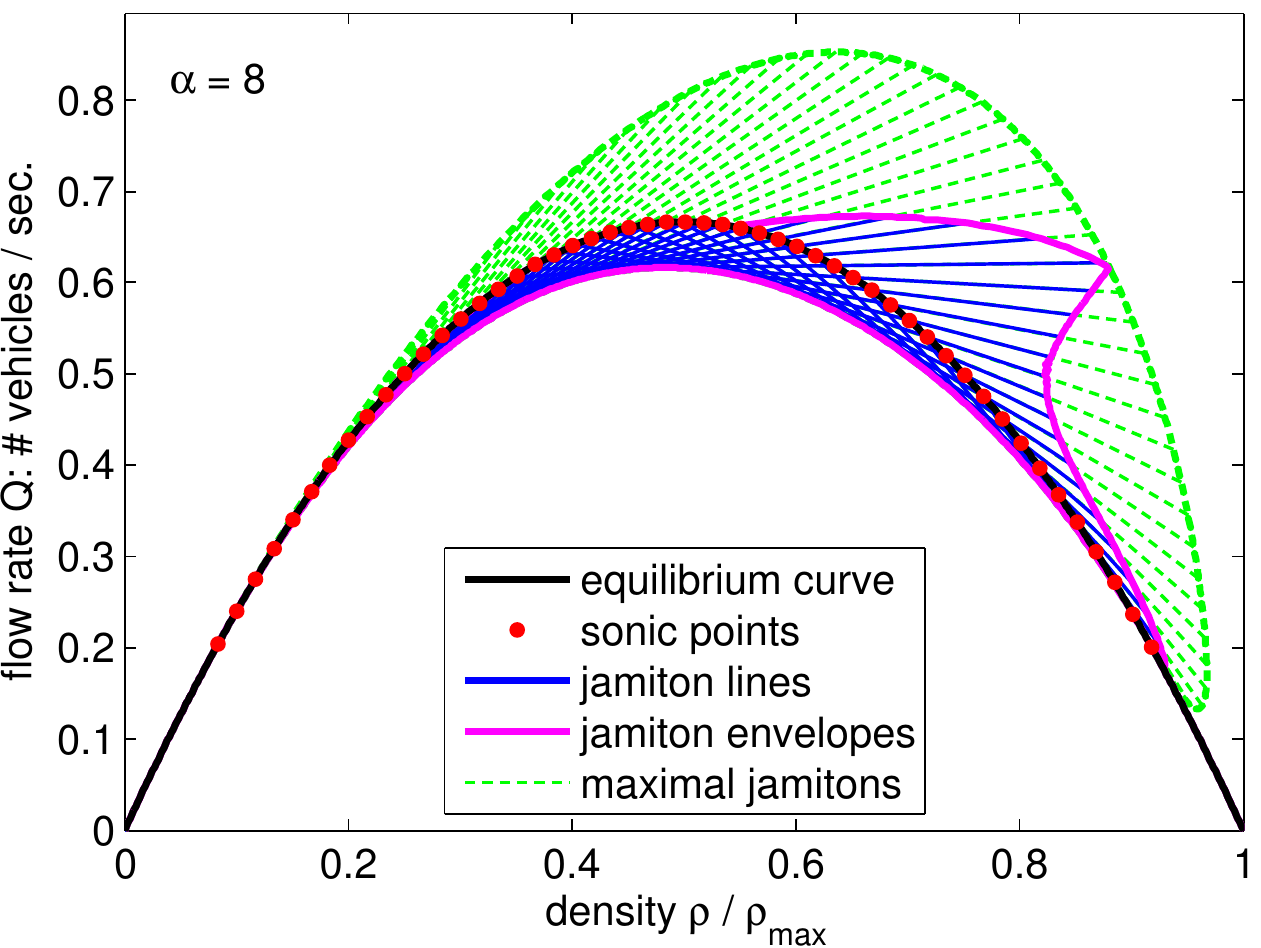}
\vspace{-1.8em}
\caption{Jamiton FD with long aggregation time $\Delta t = 8\tau$ for the example \textbf{PW1} (see Sect.~\ref{subsec:examples}).}
\label{fig:fd_pw_uquad_plog_08}
\end{minipage}
\hfill
\begin{minipage}[b]{.485\textwidth}
\includegraphics[width=\textwidth]{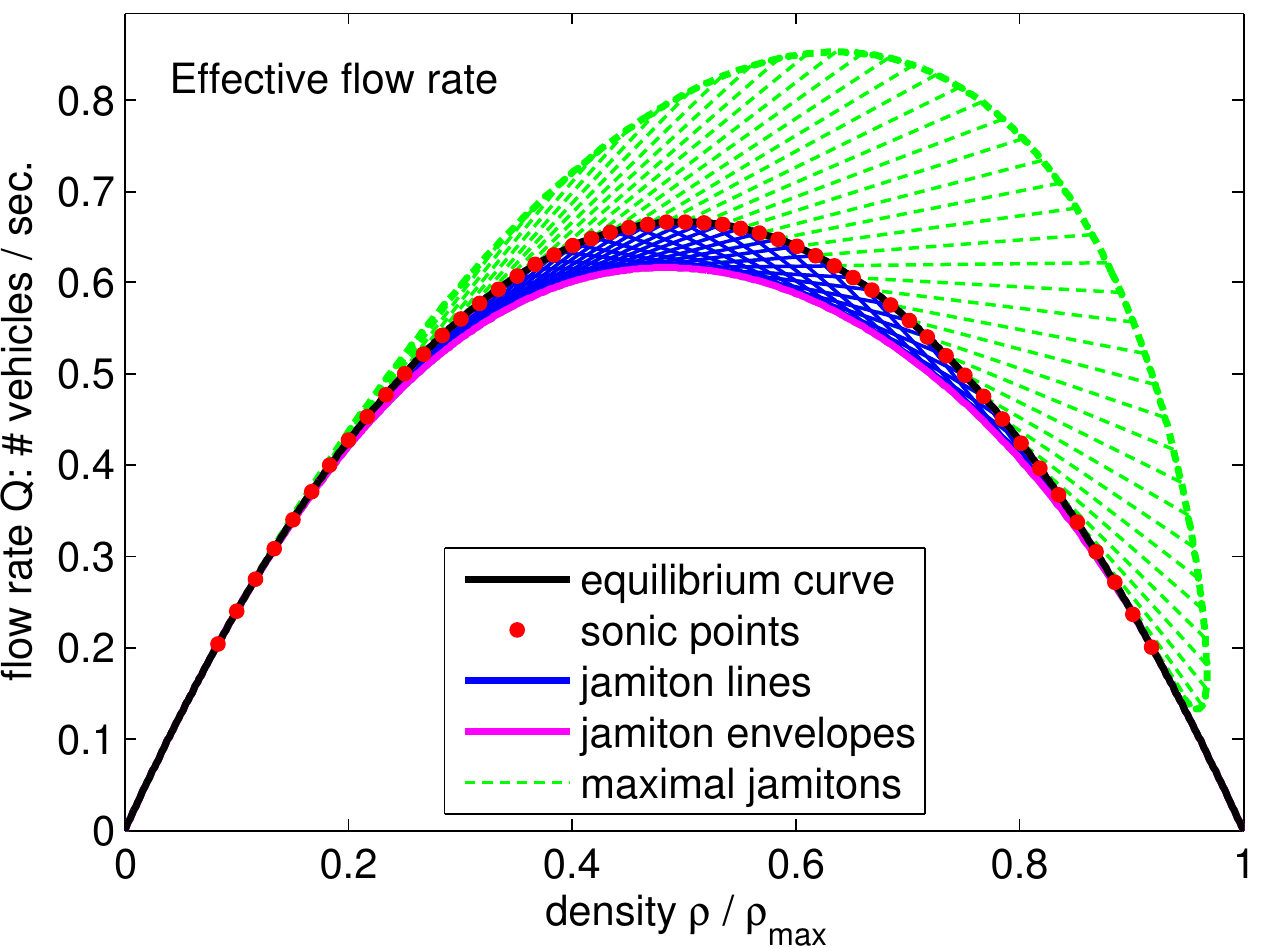}
\vspace{-1.8em}
\caption{Jamiton FD with effective flow rate (i.e., aggregation over complete jamitons) for the example \textbf{PW1} (see Sect.~\ref{subsec:examples}).}
\label{fig:fd_pw_uquad_plog_inf}
\end{minipage}
\end{figure}

\begin{figure}
\begin{minipage}[b]{.485\textwidth}
\includegraphics[width=\textwidth]{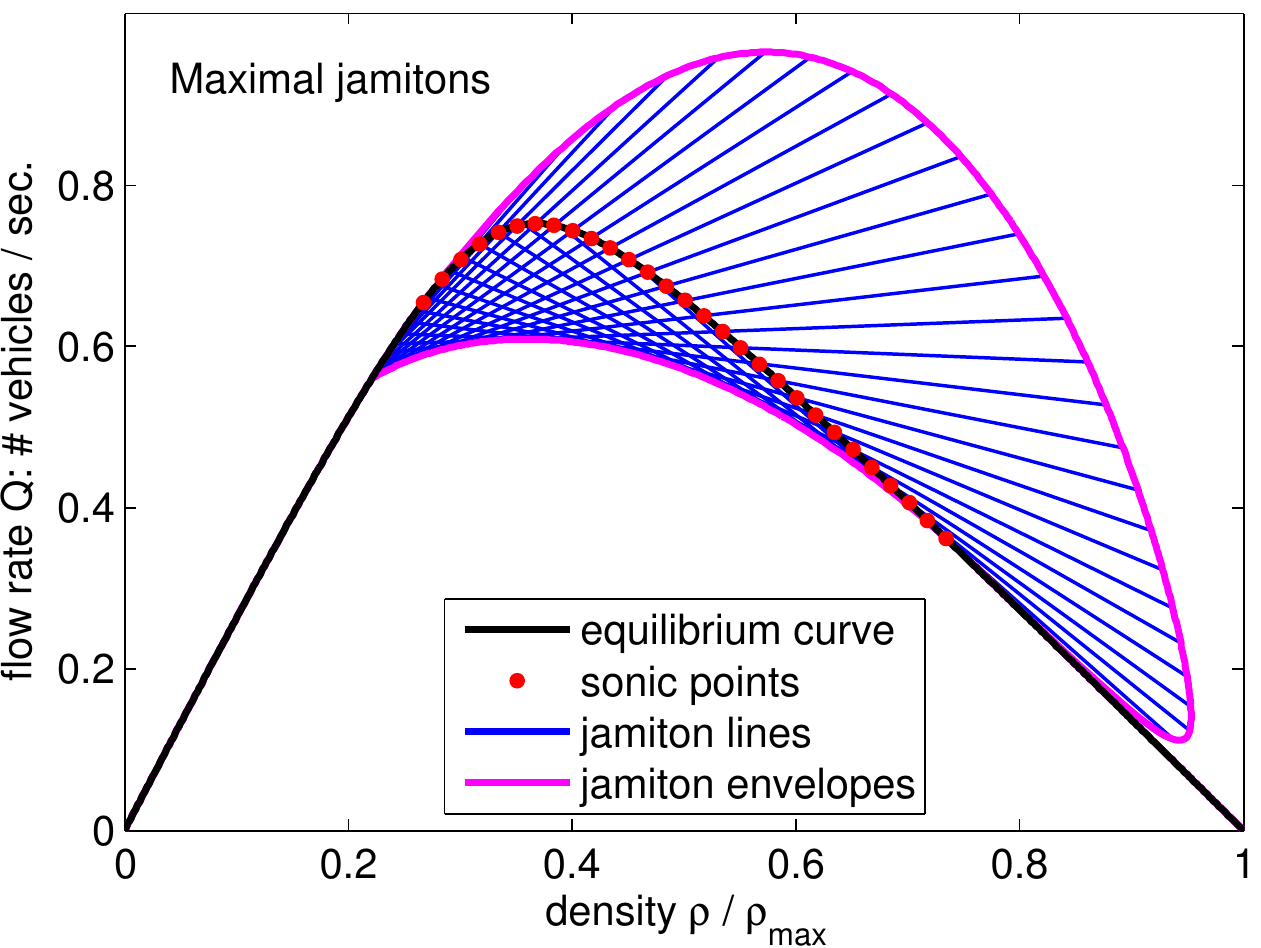}
\vspace{-1.8em}
\caption{Maximal jamiton FD (i.e., no temporal aggregation) for the example \textbf{PW2} (see Sect.~\ref{subsec:examples}).}
\label{fig:fd_pw_ukink_plog_00}
\end{minipage}
\hfill
\begin{minipage}[b]{.485\textwidth}
\includegraphics[width=\textwidth]{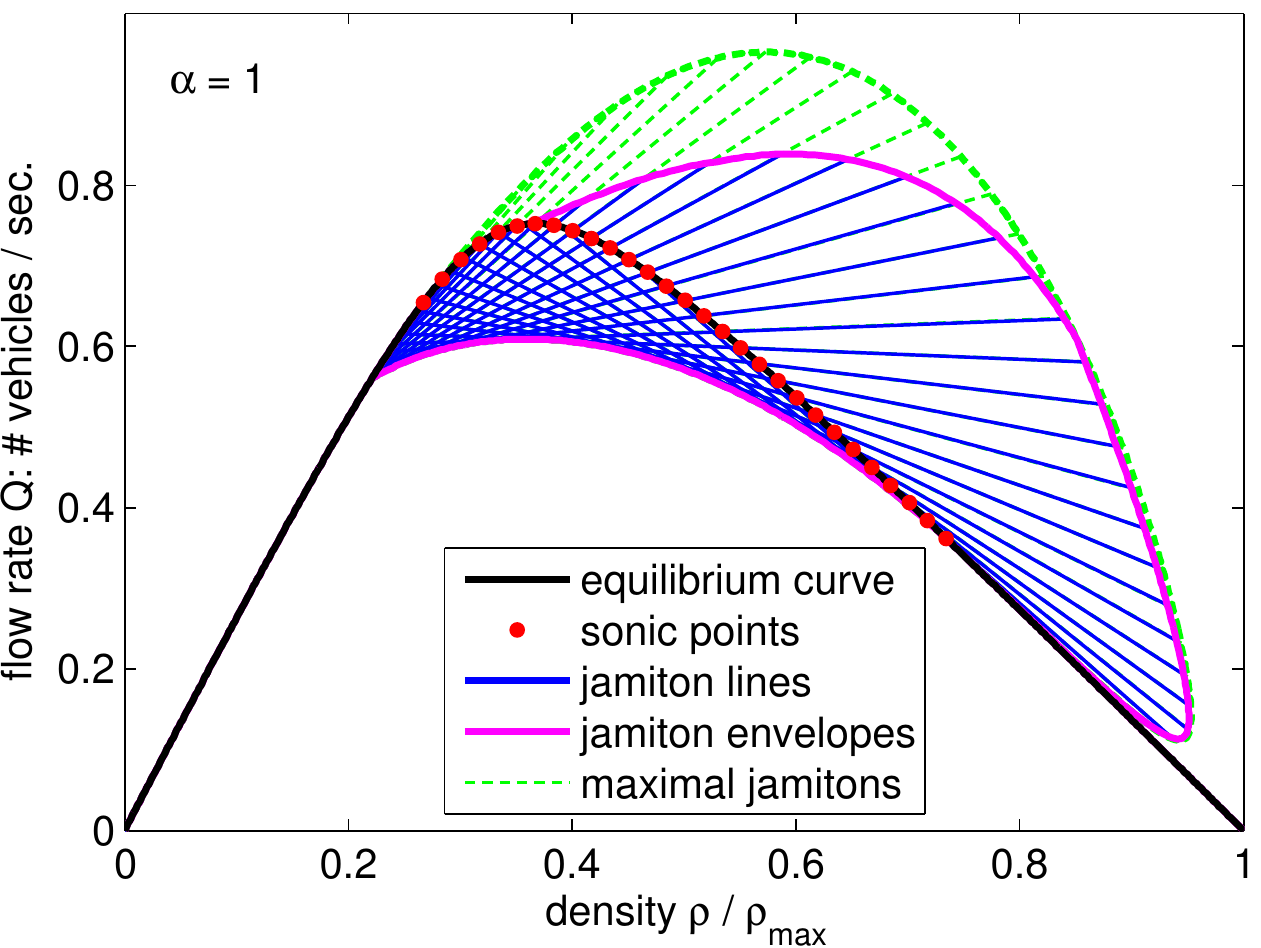}
\vspace{-1.8em}
\caption{Jamiton FD with short aggregation time $\Delta t = \tau$ for the example \textbf{PW2} (see Sect.~\ref{subsec:examples}). \\}
\label{fig:fd_pw_ukink_plog_01}
\end{minipage}

\vspace{.5em}
\begin{minipage}[b]{.485\textwidth}
\includegraphics[width=\textwidth]{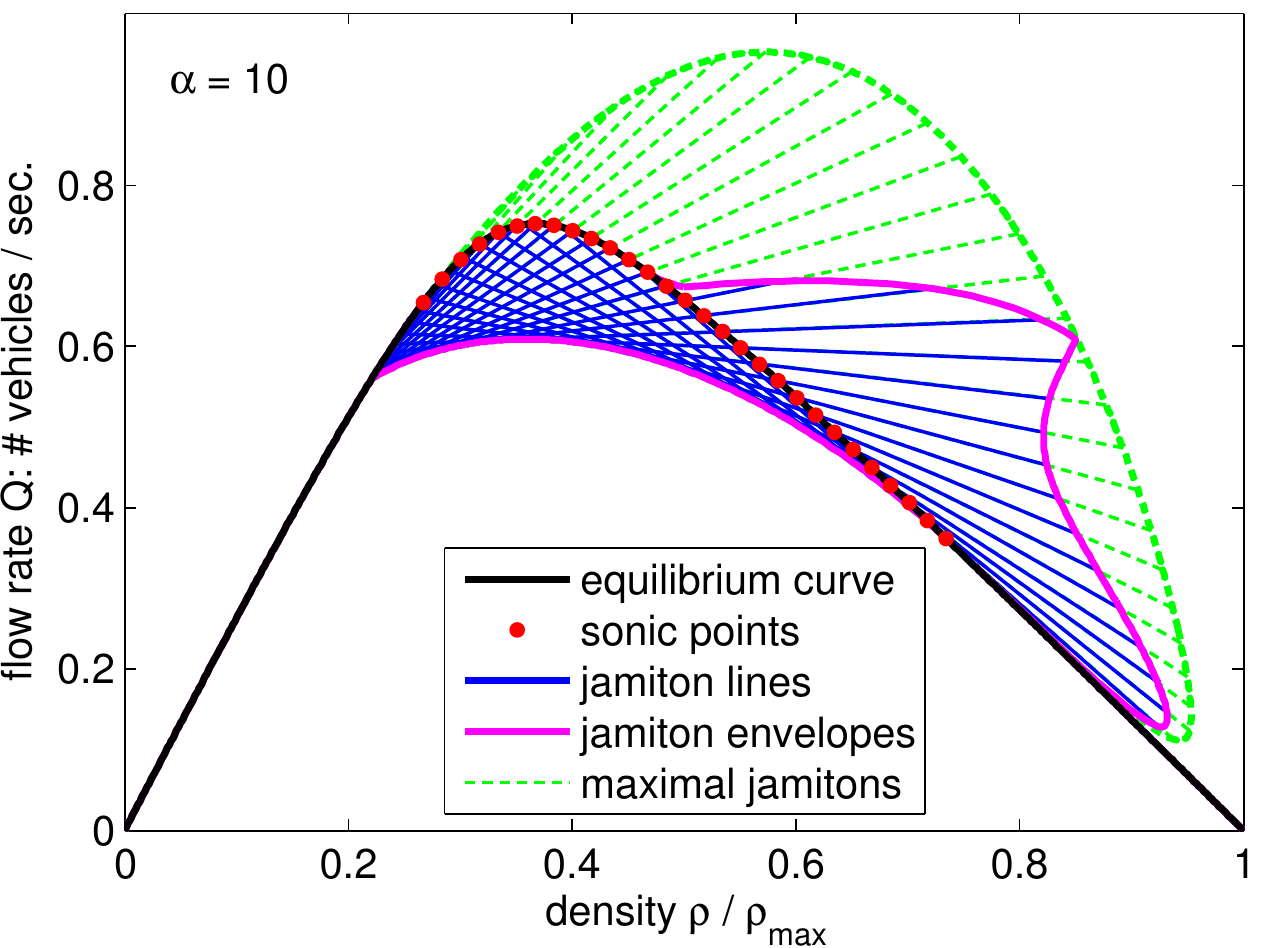}
\vspace{-1.8em}
\caption{Jamiton FD with long aggregation time $\Delta t = 10\tau$ for the example \textbf{PW2} (see Sect.~\ref{subsec:examples}).}
\label{fig:fd_pw_ukink_plog_10}
\end{minipage}
\hfill
\begin{minipage}[b]{.485\textwidth}
\includegraphics[width=\textwidth]{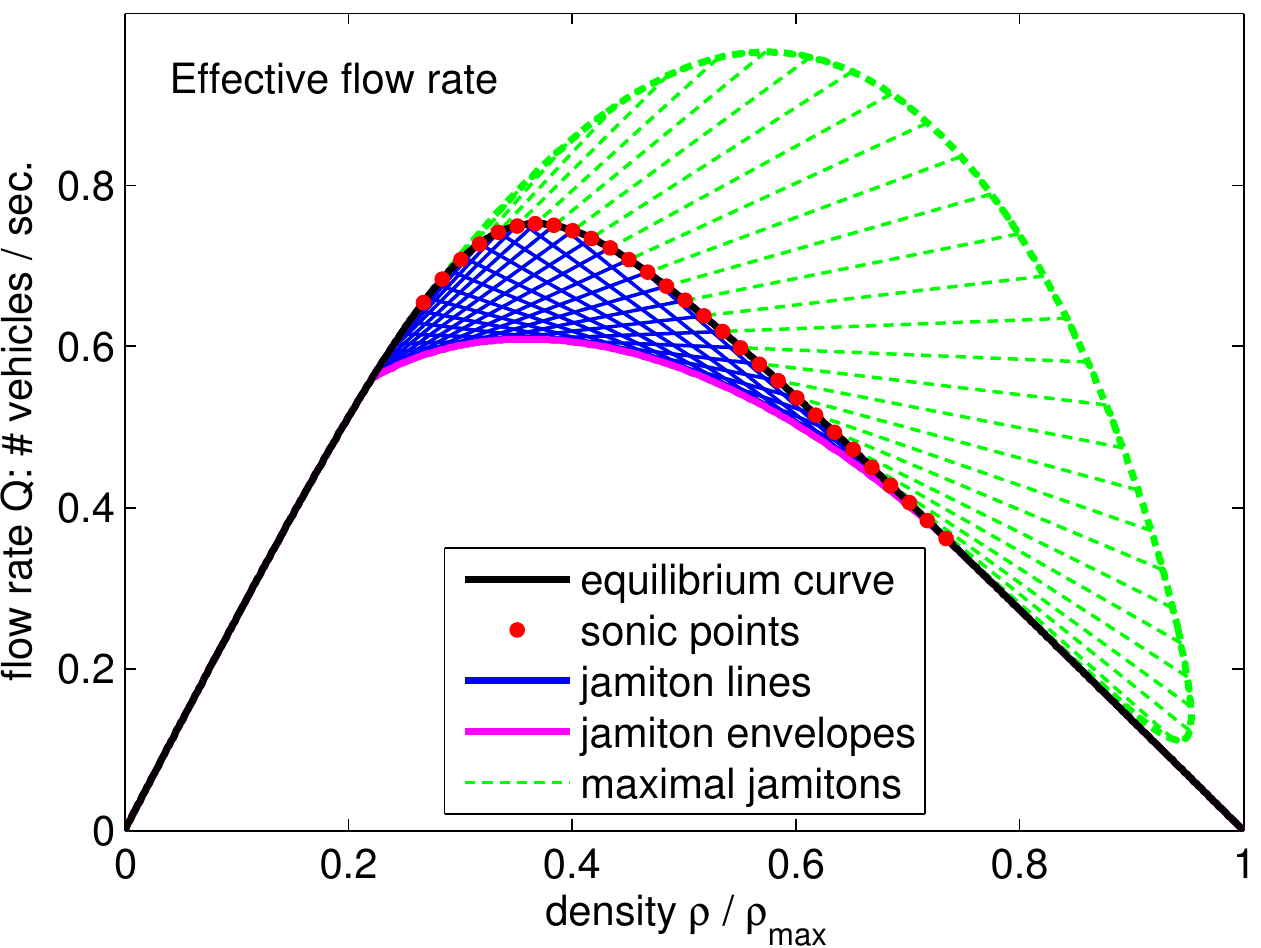}
\vspace{-1.8em}
\caption{Jamiton FD with effective flow rate (i.e., aggregation over complete jamitons) for the example \textbf{PW2} (see Sect.~\ref{subsec:examples}).}
\label{fig:fd_pw_ukink_plog_inf}
\end{minipage}
\end{figure}

\begin{figure}
\begin{minipage}[b]{.485\textwidth}
\includegraphics[width=\textwidth]{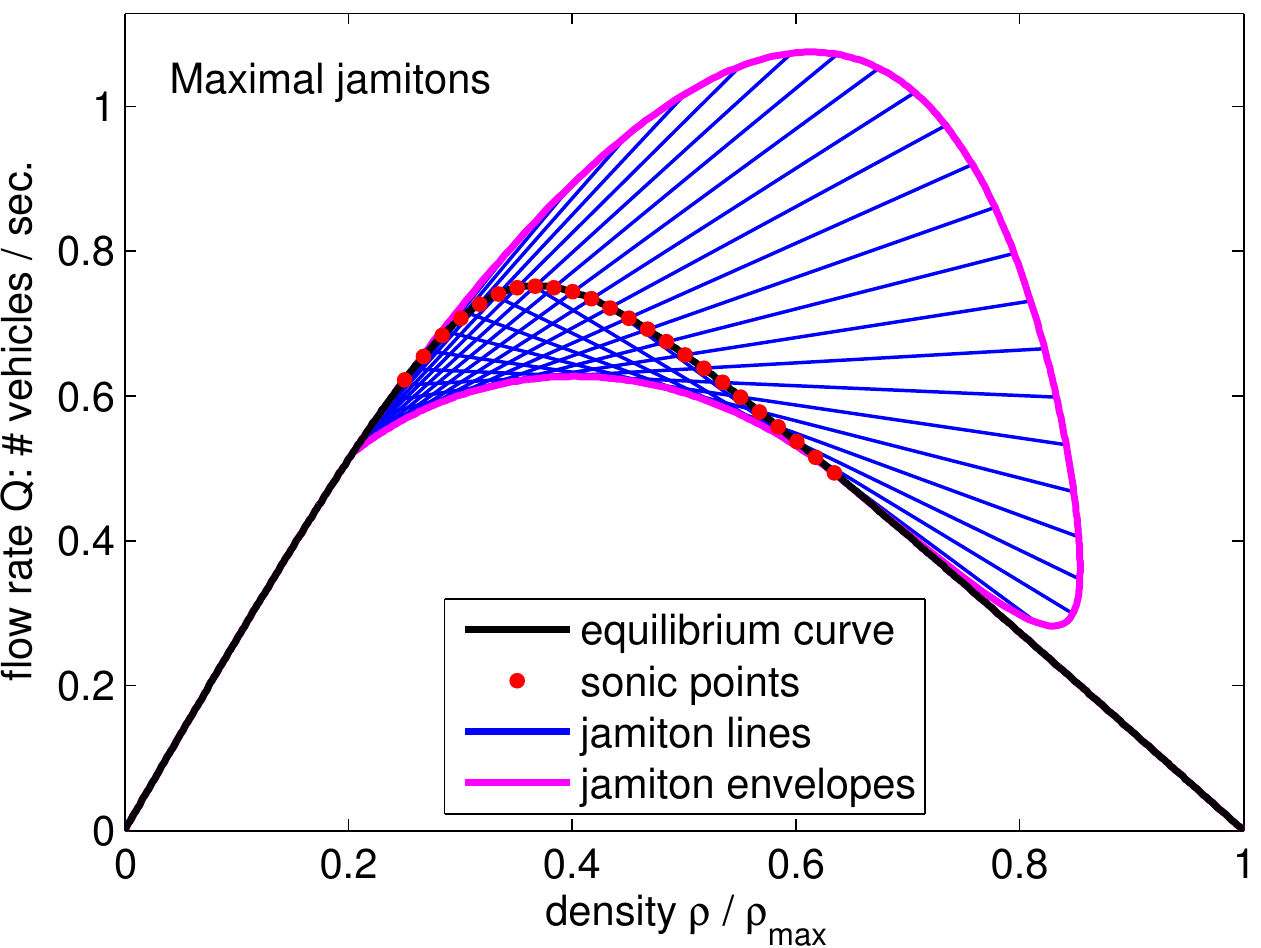}
\vspace{-1.8em}
\caption{Maximal jamiton FD (i.e., no temporal aggregation) for the example \textbf{ARZ1} (see Sect.~\ref{subsec:examples}).}
\label{fig:fd_ar_ukink_pbddr_00}
\end{minipage}
\hfill
\begin{minipage}[b]{.485\textwidth}
\includegraphics[width=\textwidth]{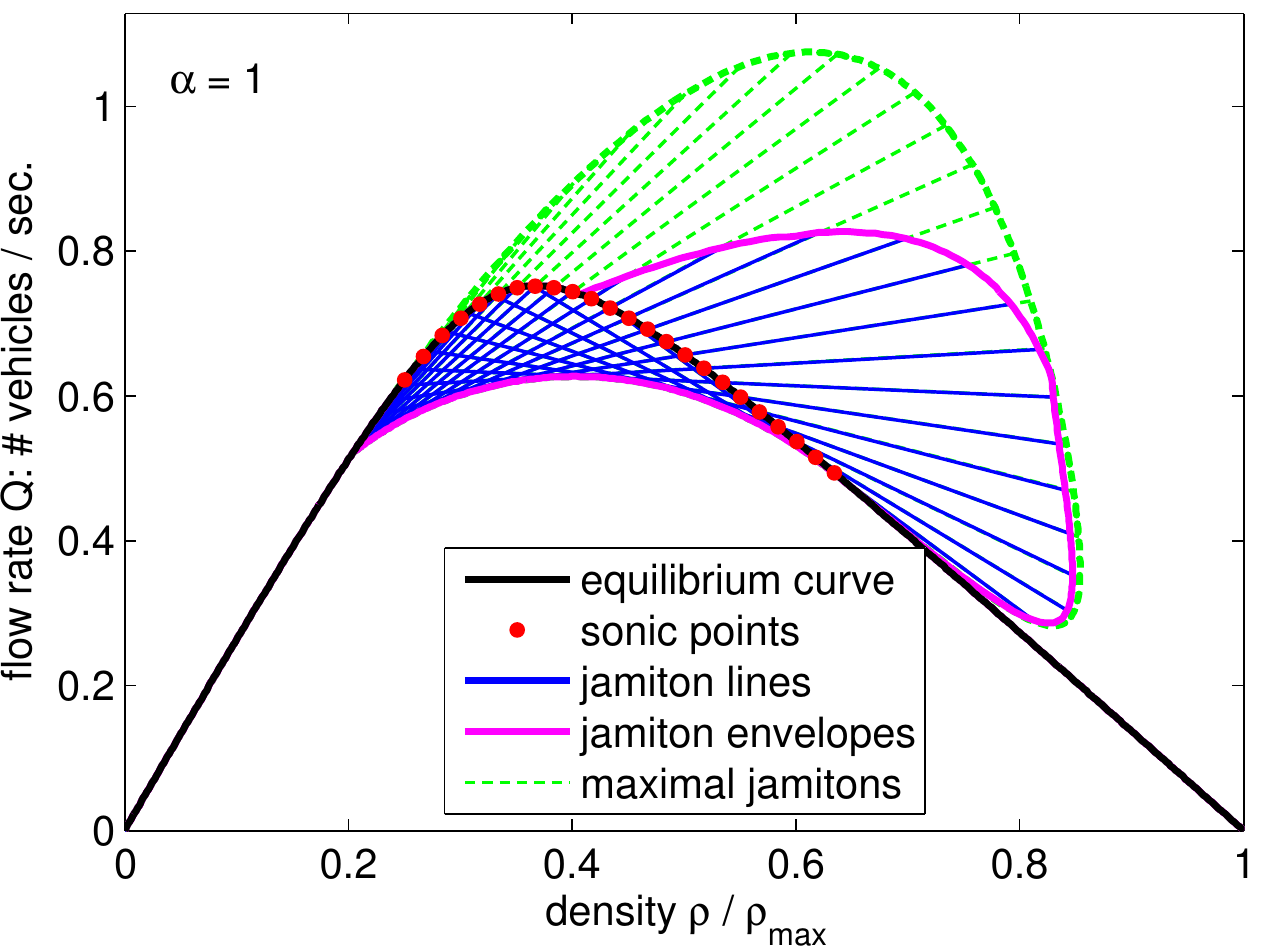}
\vspace{-1.8em}
\caption{Jamiton FD with short aggregation time $\Delta t = \tau$ for the example \textbf{ARZ1} (see Sect.~\ref{subsec:examples}).}
\label{fig:fd_ar_ukink_pbddr_01}
\end{minipage}

\vspace{.5em}
\begin{minipage}[b]{.485\textwidth}
\includegraphics[width=\textwidth]{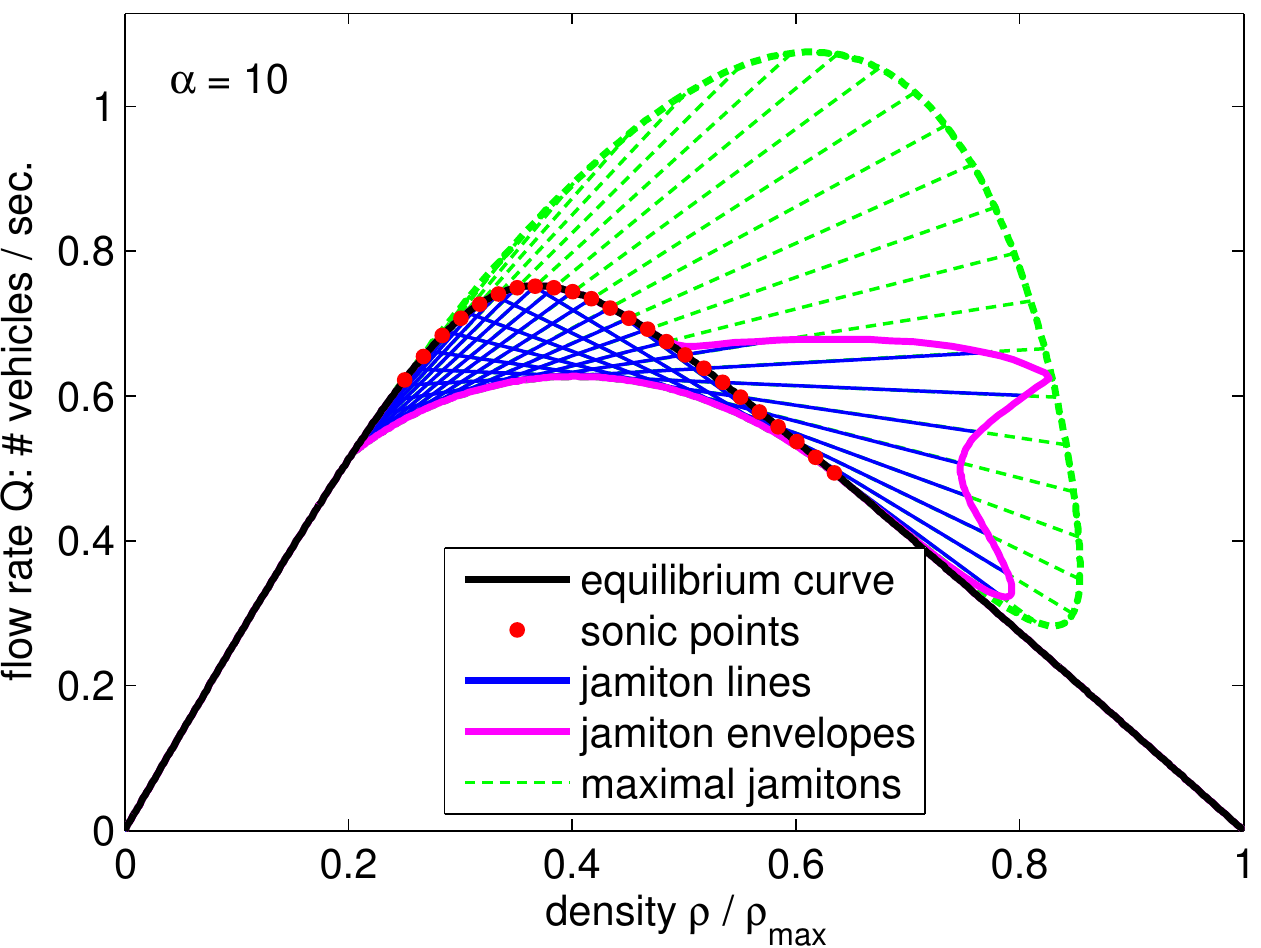}
\vspace{-1.8em}
\caption{Jamiton FD with long aggregation time $\Delta t = 10\tau$ for the example \textbf{ARZ1} (see Sect.~\ref{subsec:examples}).}
\label{fig:fd_ar_ukink_pbddr_10}
\end{minipage}
\hfill
\begin{minipage}[b]{.485\textwidth}
\includegraphics[width=\textwidth]{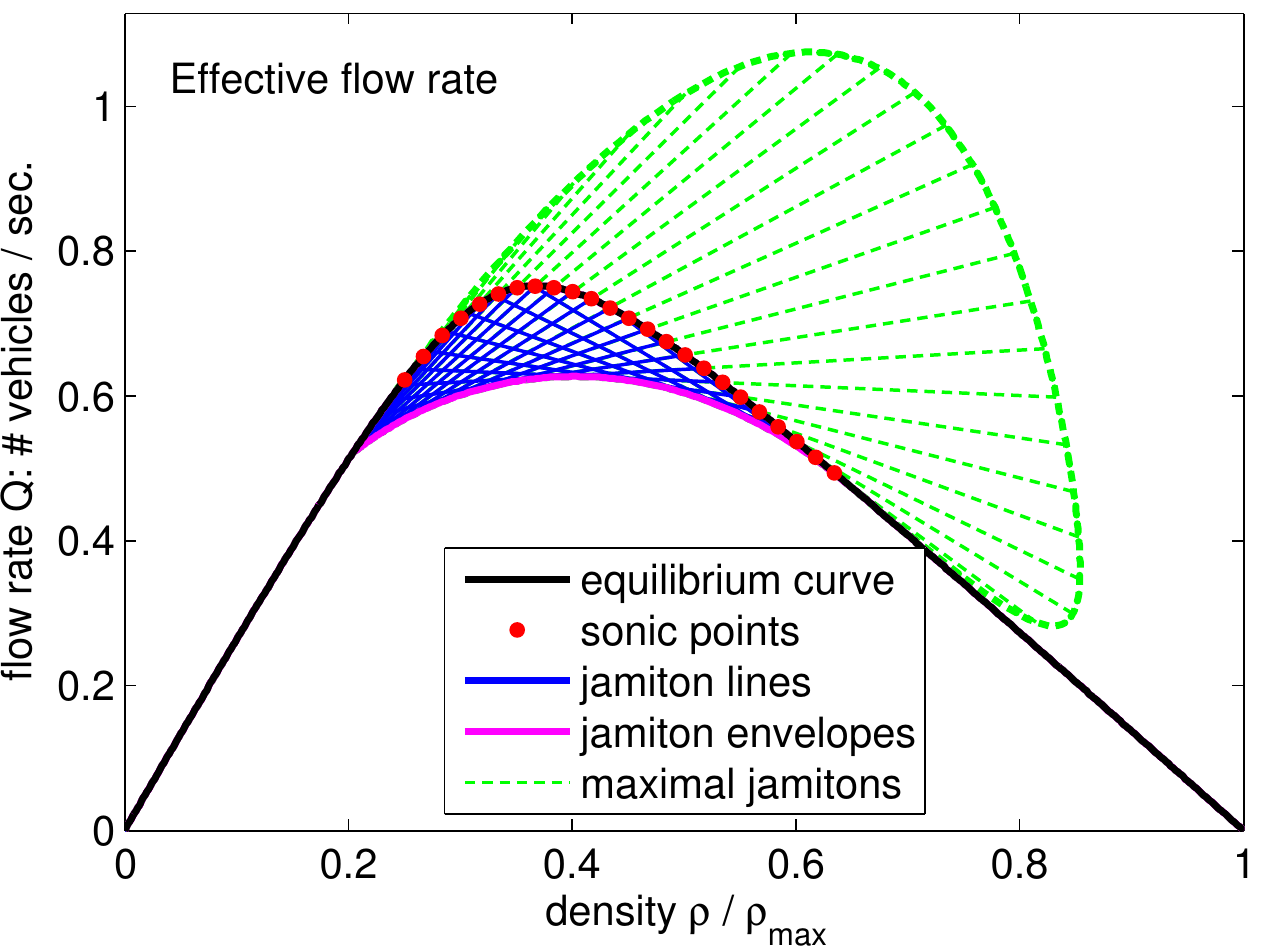}
\vspace{-1.8em}
\caption{Jamiton FD with effective flow rate (i.e., aggregation over complete jamitons) for the example \textbf{ARZ1} (see Sect.~\ref{subsec:examples}).}
\label{fig:fd_ar_ukink_pbddr_inf}
\end{minipage}
\end{figure}

\subsection{Emulating Sensor Measurements}
\label{subsec:emulating_sensor_measurements}
The jamiton FDs shown in Figs.~\ref{fig:fd_pw_uquad_plog_00},~\ref{fig:fd_pw_ukink_plog_00}, and~\ref{fig:fd_ar_ukink_pbddr_00} appear to not resemble the sensor data (shown in Fig.~\ref{fig:fd_data}) well in that they can exhibit a large region above the equilibrium curve. The reason for this discrepancy is that the jamiton FD represents point-wise values of $\rho$ and $Q$ along the jamiton profile, whereas the data FD is composed of aggregated information (see below).

In fact, the mere size of the jamiton region above the equilibrium curve is somewhat misleading: for each jamiton line in the FD, the segment below the equilibrium curve corresponds to the (possibly infinitely) long low density piece of a jamiton, while the segment above the equilibrium curve corresponds to a comparatively short piece of the jamiton, at which the density is high (see Figs.~\ref{fig:jamiton_curve} and~\ref{fig:jamiton_curve_rho_u}). In addition, as one can see in the example jamiton shown in Fig.~\ref{fig:jamiton_curve_rho_u}, the high density part frequently comes also with a large negative density gradient along the jamiton curve. As a consequence, when considering aggregated data, the region above the equilibrium curve contributes much less to an average than the region below it.

Measurement data used to generate fundamental diagrams are typically obtained by stationary sensors on the road that measure the average density and average flow rate during a time interval $\Delta t$. In other words, each point in a data FD corresponds to an aggregation of the traffic situation over a non-zero time interval. In practice, in order to obtain reliable estimates for the flow rate (which is obtained by counting vehicles), the aggregation time $\Delta t$ must not be too small. A typical value is $\Delta t = 30\text{s}$, as is used for instance in the Mn/DOT RTMC data set \cite{TrafficMnDOT}, see the description in Sect.~\ref{subsec:fundamental_diagram}. Hence, in order to obtain a better agreement with sensor measurements, we must represent the jamiton FD in a way that is more faithful to the actual measurement process. We therefore perform an averaging over jamiton solutions, as follows.

For a given sonic density $\rho_\text{S}$, we consider a one-parameter family of traveling wave solutions, each being an infinite chain of identical jamitons, where the parameter is the length of each individual jamiton, measured from shock to shock. For each such periodic traveling wave solution, we consider temporal averages of duration $\Delta t$ at a fixed position $\bar{x}$ on the road. Because the chain of jamitons moves with velocity $s$, these temporal averages can be translated into averages over the Eulerian similarity variable $\eta = \frac{x-st}{\tau}$. Specifically, the average density is
\begin{equation}
\label{eq:average_rho}
\begin{split}
\bar{\rho} &= \tfrac{1}{\Delta t}\int_0^{\Delta t} \rho(\bar{x},t)\ud{t}
= \tfrac{1}{\Delta t}\int_0^{\Delta t} \rho(-\tfrac{\bar{x}-st}{\tau})\ud{t} \\
&= \tfrac{\tau}{s\Delta t}\int_{\tfrac{\bar{x}-s\Delta t}{\tau}}^{\frac{\bar{x}}{\tau}} \rho(\eta)\ud{\eta}
= \tfrac{1}{\Delta\eta}\int_{\bar{\eta}-\Delta\eta}^{\bar{\eta}} \rho(\eta)\ud{\eta}\;,
\end{split}
\end{equation}
where $\Delta\eta = \tfrac{s\Delta t}{\tau}$ is the averaging ``length'' (in the $\eta$ variable), and $\bar{\eta} = \tfrac{\bar{x}}{\tau}$ parameterizes the ``position'' (in the $\eta$ variable) of the averaging interval. Similarly, the average flow rate equals
\begin{equation}
\label{eq:average_Q}
\bar{Q} = \tfrac{1}{\Delta\eta}\int_{\bar{\eta}-\Delta\eta}^{\bar{\eta}} Q(\eta)\ud{\eta}
= \tfrac{1}{\Delta\eta}\int_{\bar{\eta}-\Delta\eta}^{\bar{\eta}} m+s\rho(\eta)\ud{\eta}
= m+s\bar{\rho}\;,
\end{equation}
which implies that any average pair $(\bar{\rho},\bar{Q})$ lies on the corresponding jamiton line $Q = m+s\rho$. Note that these averages depend only on the dimensionless quantity
\begin{equation*}
\alpha = \frac{\Delta t}{\tau}\;,
\end{equation*}
rather than $\Delta t$ or $\tau$ alone, which measures the temporal averaging duration in units of the drivers' relaxation time $\tau$.

For sufficiently large $\Delta t$ and for sufficiently short jamitons, the averages \eqref{eq:average_rho} and \eqref{eq:average_Q} may involve one or more shocks. In this case, it is helpful to split the integrals into multiple segments, each of which corresponds to a smooth jamiton curve. For each of these segments, it follows from \eqref{eq:jamiton_mass} that
\begin{equation*}
\bar{\rho}
= \tfrac{1}{\Delta\eta}\int_{\bar{\eta}-\Delta\eta}^{\bar{\eta}} \rho(\eta)\ud{\eta}
= \tfrac{1}{\Delta\eta}\int_{\vv(\bar{\eta}-\Delta\eta)}^{\vv(\bar{\eta})} \frac{r'(\vv)}{w(\vv)}\ud{\vv}\;,
\end{equation*}
which can be computed efficiently using composite Gaussian quadrature rules.

Using these averaging procedures, we can now, for each choice of $\alpha = \tfrac{\Delta t}{\tau}$, produce an \emph{aggregated jamiton fundamental diagram}, by plotting on each jamiton line segment all possible pairs $(\bar{\rho},\bar{Q})$ that can arise as averages. When doing so, the jamiton line segment above the equilibrium curve is shortened, because the peak density $\rho_\text{R}$ is averaged with lower density values. This can be observed in Fig.~\ref{fig:jamiton_curve_rho_u}: even for the maximal jamiton, the segment where $\rho>\rho_\text{S}$ is of finite length, and thus the maximum possible average $\bar{\rho}$ decreases with increasing averaging length $\Delta\eta$. In turn, the line segment below the equilibrium curve is not shortened: due to the infinite length of the maximal jamiton towards $\rho_\text{M}$ (see Fig.~\ref{fig:jamiton_curve_rho_u}), averages can always become as low as $\rho_\text{M}$ itself, no matter how large $\Delta\eta$ is chosen.

Aggregated jamiton FDs are shown for the example PW1 in Figs.~\ref{fig:fd_pw_uquad_plog_01} and~\ref{fig:fd_pw_uquad_plog_08}, for the example PW2 in Figs.~\ref{fig:fd_pw_ukink_plog_01} and~\ref{fig:fd_pw_ukink_plog_10}, and for the example AR1 in Figs.~\ref{fig:fd_ar_ukink_pbddr_01} and~\ref{fig:fd_ar_ukink_pbddr_10}. In all three examples, the first figure corresponds to a short aggregation duration of $\Delta t = \tau$, and the second figure corresponds to a long aggregation duration of $\Delta t = 8\tau$ or $\Delta t = 10\tau$, respectively. The maximal jamiton FD, constructed in Sect.~\ref{subsec:fundamental_diagram_maximal_jamiton}, is recovered in the limit $\alpha\to 0$, for which the averaging length goes to zero, and thus averages turn into point evaluations, i.e.
\begin{equation*}
\lim_{\Delta\eta\to 0}
\tfrac{1}{\Delta\eta}\int_{\bar{\eta}-\Delta\eta}^{\bar{\eta}} \rho(\eta)\ud{\eta}
= \rho(\bar{\eta})\;.
\end{equation*}
A comparison of the aggregated jamiton FDs with the data FD (shown in Fig.~\ref{fig:fd_data}) shows a better qualitative agreement than in the case without the aggregation. In particular, the very high values of $Q$, corresponding to high density parts of jamitons, are significantly reduced. An interesting feature that can be seen in Figs.~\ref{fig:fd_pw_uquad_plog_08},~\ref{fig:fd_pw_ukink_plog_10}, and~\ref{fig:fd_ar_ukink_pbddr_10} is the cusp at the $\rho_\text{R}$ point of the jamiton line that is parallel to the $\rho$ axis. The reason for this is that for $s=0$, the jamiton is stationary relative to the road, and thus aggregated values are identical to un-averaged point evaluations.

\subsection{Effective Flow Rate}
In all jamiton FDs shown in Figs.~\ref{fig:fd_pw_uquad_plog_00}--\ref{fig:fd_pw_uquad_plog_08}, Figs.~\ref{fig:fd_pw_ukink_plog_00}--\ref{fig:fd_pw_ukink_plog_10}, and Figs.~\ref{fig:fd_ar_ukink_pbddr_00}--\ref{fig:fd_ar_ukink_pbddr_10}, the jamiton region lies partially above the equilibrium curve. This is due to the fact that on their upstream side, jamitons possess densities that exceed the sonic density (see Fig.~\ref{fig:jamiton_curve_rho_u}). If averages involve more of this high-density part than of low-density parts, then the aggregated density and flow rate pair $(\bar{\rho},\bar{Q})$ may lie above the equilibrium curve.

To obtain the true effective flow rate that a chain of jamitons yields, one needs to perform the aggregation over infinitely long time intervals (we are neglecting the singularity at $s=0$ here), or equivalently, over full jamitons, i.e., from shock to shock. This allows one to investigate the important questions: 1) Do jamitons always lead to a reduction in the effective flow rate, compared to uniform traffic flow with the same average density? or 2) Can one construct physically realistic examples in which jamitons actually increase the effective flow rate?

To answer these questions, we first apply the full-jamiton averaging to the three test cases presented in Sect.~\ref{subsec:examples}. The resulting \emph{effective flow rate jamiton fundamental diagrams} are shown in Figs.~\ref{fig:fd_pw_uquad_plog_inf},~\ref{fig:fd_pw_ukink_plog_inf}, and~\ref{fig:fd_ar_ukink_pbddr_inf}. One can observe that in all three examples the jamiton region below the equilibrium curve remains unchanged, while it has fully vanished from above the equilibrium curve. The former observation is a consequence of the fact that on the one hand, very short jamitons possess average densities very close to their sonic density, and that on the other hand, very long jamitons possess average densities close to their minimum density. In contrast, the latter conclusion, which means that in the considered examples jamitons always reduce the effective flow rate, does not follow immediately from the shape of the jamitons. However, the theoretical answer to this question is provided by the following
\begin{thm}
\label{thm:effective_flow_rate}
The effective flow rate of a periodic chain of identical jamitons is always lower than the flow rate of a uniform base state solution with the same average density.
\end{thm}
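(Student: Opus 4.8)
The plan is to show that the pair $(\bar\rho,\bar Q)$ obtained by averaging over one full jamiton (shock to shock) lies strictly below the equilibrium curve $Q_\text{eq}$. The first ingredient, already recorded in \eqref{eq:average_Q}, is that this point sits on the jamiton line $Q=m+s\rho$, because the pointwise flow rate along a jamiton satisfies $Q(\eta)=m+s\rho(\eta)$. Next I would recall the geometry of this line: by the Chapman-Jouguet condition \eqref{eq:chapman_jouguet_condition} and the identity $\rho_\text{M}U(\rho_\text{M})=m+s\rho_\text{M}$ (coming from $w(\vv_\text{M})=0$), the jamiton line is exactly the secant of $Q_\text{eq}$ through $(\rho_\text{M},Q_\text{eq}(\rho_\text{M}))$ and $(\rho_\text{S},Q_\text{eq}(\rho_\text{S}))$, with $\rho_\text{M}<\rho_\text{S}$. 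Since $Q_\text{eq}$ is concave (assumption (a)), this secant lies strictly below $Q_\text{eq}$ for densities in $(\rho_\text{M},\rho_\text{S})$ and strictly above it outside $[\rho_\text{M},\rho_\text{S}]$. Hence the whole theorem reduces to the single claim that the effective density satisfies $\rho_\text{M}<\bar\rho<\rho_\text{S}$.

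To analyze $\bar\rho$, I would use \eqref{eq:jamiton_length} and \eqref{eq:jamiton_mass} to write $\bar\rho=N/L=1/\langle\vv\rangle$, where $\langle\vv\rangle=L/N$ is the average of $\vv$ with respect to the positive measure $\ud\mu=\tfrac{r'(\vv)}{w(\vv)}\ud\vv$ on $(\vv^+,\vv^-)$; equivalently, since $\ud\chi=\tfrac{r'(\vv)}{w(\vv)}\ud\vv$ along the profile, $\langle\vv\rangle$ is simply the plain average of $\vv(\chi)$ over the self-similar interval $[\chi^+,\chi^-]$. In these terms the two required bounds become $\vS<\langle\vv\rangle<\vv_\text{M}$. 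The upper bound $\langle\vv\rangle<\vv_\text{M}$ (equivalently $\bar\rho>\rho_\text{M}$) is immediate for a finite jamiton, because $\vv$ ranges only over $[\vv^+,\vv^-]$ with $\vv^-<\vv_\text{M}$, so any average of $\vv$ is strictly below $\vv_\text{M}$.

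The main obstacle is the lower bound $\langle\vv\rangle>\vS$, i.e.\ $I:=\int_{\vv^+}^{\vv^-}(\vv-\vS)\tfrac{r'(\vv)}{w(\vv)}\ud\vv>0$. My approach is to change the variable of integration from $\vv$ to $r$ and pair the two branches of $r(\vv)$ about its minimum at $\vS$. Writing $\vv_\text{L}(r)<\vS<\vv_\text{R}(r)$ for the two solutions of $r(\vv)=r$ in $(\vv^+,\vv^-)$, the substitution $r'(\vv)\ud\vv=\ud r$ on each branch turns $I$ into
\begin{equation*}
I=\int_{r_\text{min}}^{\mathring r}\brk{\phi(\vv_\text{R}(r))-\phi(\vv_\text{L}(r))}\ud r\;,
\qquad \phi(\vv):=\frac{\vv-\vS}{w(\vv)}\;,
\end{equation*}
where $\phi>0$ throughout (the common simple zero at $\vS$ is removable, with $\phi(\vS)=1/w'(\vS)>0$ by the jamiton existence condition $U'(\vS)>m$). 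I would then show that $\phi$ is strictly increasing: its derivative has the sign of $\psi(\vv):=w(\vv)-(\vv-\vS)w'(\vv)$, which satisfies $\psi(\vS)=0$ and $\psi'(\vv)=-(\vv-\vS)w''(\vv)$, so that $\psi$ attains a strict minimum $0$ at $\vS$ precisely because $w''=U''<0$ by assumption (a). Thus $\psi>0$ off $\vS$, so $\phi'>0$, and since $\vv_\text{R}(r)>\vv_\text{L}(r)$ for every $r\in(r_\text{min},\mathring r)$ the integrand is strictly positive, giving $I>0$, i.e.\ $\langle\vv\rangle>\vS$ and hence $\bar\rho<\rho_\text{S}$.

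Combining the two bounds places $\bar\rho$ strictly inside $(\rho_\text{M},\rho_\text{S})$, so $\bar Q=m+s\bar\rho<Q_\text{eq}(\bar\rho)$, which is the assertion. I expect the monotonicity of $\phi$ to be the crux; the rest is bookkeeping. Finally, I would observe that the argument is uniform across both models: the definition of $w$ and the sonic structure are identical, the only difference being $r(\vv)=p(\vv)+m^2\vv$ for the PW model versus $r(\vv)=mh(\vv)+m^2\vv$ for the ARZ model, and in both cases $r''>0$ (from assumptions (b), (c) together with $m>0$) and $w''=U''<0$, which is all the proof uses.
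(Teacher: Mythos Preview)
Your proof is correct and follows essentially the same route as the paper: reduce to $\bar\rho<\rho_\text{S}$, rewrite as $\int_{\vv^+}^{\vv^-}(\vv-\vS)\tfrac{r'(\vv)}{w(\vv)}\ud\vv>0$, change variables to $r$ and pair the two branches, then show $\phi(\vv)=(\vv-\vS)/w(\vv)$ is increasing. The only cosmetic difference is that the paper obtains the monotonicity of $\phi$ in one stroke by reading $1/\phi(\vv)$ as the slope of the secant of $w$ through $(\vS,0)$ and $(\vv,w(\vv))$ and invoking concavity of $w$ directly, whereas you differentiate $\phi$ and analyze $\psi(\vv)=w(\vv)-(\vv-\vS)w'(\vv)$; both arguments use exactly $w''=U''<0$ and are equivalent.
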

\begin{proof}
The average flow rate $\bar{Q}$ being less than the flow rate of the uniform flow with the same average density, $Q(\bar{\rho})$, means that the averaged pair $(\bar{\rho},\bar{Q})$ lies below the equilibrium curve (see Fig.~\ref{fig:qrho_diagram}). Thus, we need to show that averages over full jamitons satisfy $\bar{\rho} < \rho_\text{S}$, where $\bar{\rho} = N/L$, and $N$ and $L$ are given by \eqref{eq:jamiton_length} and \eqref{eq:jamiton_mass}.

Due to the Rankine-Hugoniot shock conditions, given in Sect.~\ref{subsubsec:shocks}, a jamiton lies between two states that possess the same value of $r(\vv)$ (see Fig.~\ref{fig:rv_diagram_jamiton}). The function $r(\vv)$, given by \eqref{eq:definition_w_r} or \eqref{eq:definition_r}, respectively, can be inverted on two branches: let $\vv^+(r)$ denote its inverse function on $[\vv_\text{M},\vv_\text{S}]$, and $\vv^-(r)$ its inverse on $[\vv_\text{S},\vv_\text{R}]$. We can therefore parameterize the family of jamitons by $\mathring{r}\in [r(\vS),r(\vv_\text{R})]$. For each jamiton, we obtain
\begin{equation*}
\bar{\rho}(\mathring{r}) = N(\mathring{r})/L(\mathring{r})\;,
\quad\text{where}\quad
N(\mathring{r}) = \int_{\vv^+(\mathring{r})}^{\vv^-(\mathring{r})} \tfrac{r'(\vv)}{w(\vv)}\ud{\vv}
\quad\text{and}\quad
L(\mathring{r}) = \int_{\vv^+(\mathring{r})}^{\vv^-(\mathring{r})} \vv\tfrac{r'(\vv)}{w(\vv)}\ud{\vv}\;,
\end{equation*}
where the jamiton lies between the specific volumes $\vv^+(\mathring{r})$ and $\vv^-(\mathring{r})$.

We need to show that $N(\mathring{r})/L(\mathring{r}) \le \rho_\text{S}$, or $L(\mathring{r})/N(\mathring{r}) \ge \vS$, which is equivalent to the condition
\begin{equation*}
\int_{\vv^+(\mathring{r})}^{\vv^-(\mathring{r})} (\vv-\vS)\frac{r'(\vv)}{w(\vv)}\ud{\vv}\ge 0\;.
\end{equation*}
After changing the variable of integration to $\tilde{r} = r(\vv)$, this becomes
\begin{equation}
\label{eq:flow_rate_inequality_rnot}
\int_{r(\vS)}^{\tilde{r}}
\frac{\vv^-(\tilde{r})-\vS}{w(\vv^-(\tilde{r}))}-
\frac{\vv^+(\tilde{r})-\vS}{w(\vv^+(\tilde{r}))}
\ud{\tilde{r}}\ge 0\;,
\end{equation}
where the two quotients arise from the two branches of the inverse of $r(\vv)$. We now show that the integrand in \eqref{eq:flow_rate_inequality_rnot} is positive, which proves the claim.

As depicted in Fig.~\ref{fig:wv_diagram_secants}, consider two secants of the function $w(\vv)$: the first goes through the points $(\vv^+,w(\vv^+))$ and $(\vS,0)$, and the second goes through the points $(\vS,0)$ and $(\vv^-,w(\vv^-))$ (omitting the argument $\tilde{r}$ for notational economy). By assumption (see Sect.~\ref{subsec:assumptions_U_p_h}), $U'(\vv)>0$, thus both secant slopes are positive. Moreover, $U''(\vv)<0$, thus $w(\vv)$ is concave, and therefore, the slope of the first secant is larger than the slope of the second, i.e.
\begin{equation*}
\frac{w(\vv^+)}{\vv^+-\vS} > \frac{w(\vv^-)}{\vv^--\vS}\;.
\end{equation*}
Observing that the reciprocals of these slopes are the same as the two terms in the integrand of \eqref{eq:flow_rate_inequality_rnot} concludes the argument.
\end{proof}

\begin{figure}
\begin{minipage}[b]{.485\textwidth}
\includegraphics[width=\textwidth]{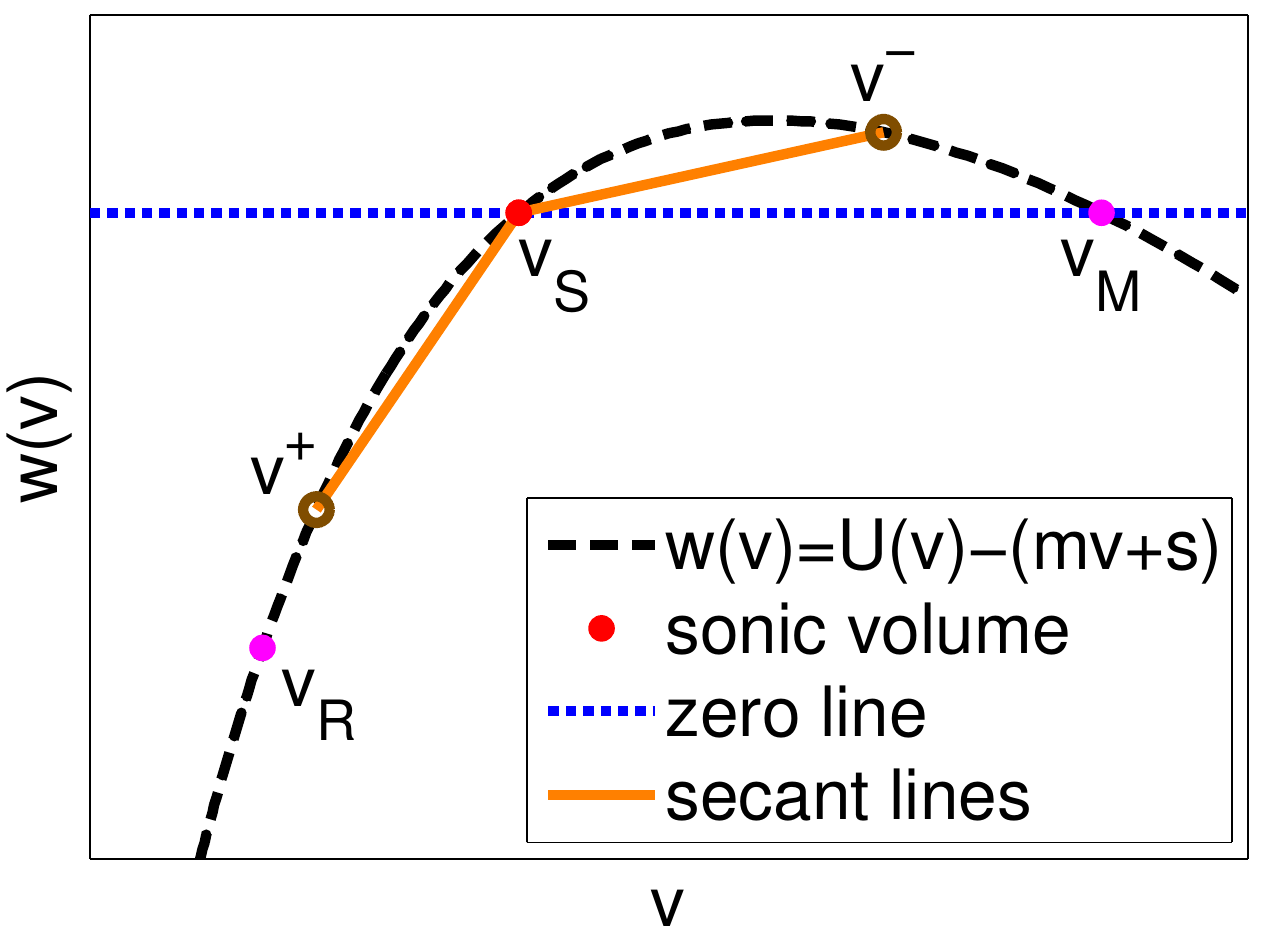}
\vspace{-1.8em}
\caption{Secants (orange lines) to the function $w(\vv)$ in the $u$--$\vv$ plane, connecting each of the jamiton volumes $\vv^+$ and $\vv^-$ with the sonic volume $\vS$.}
\label{fig:wv_diagram_secants}
\end{minipage}
\hfill
\begin{minipage}[b]{.485\textwidth}
\includegraphics[width=\textwidth]{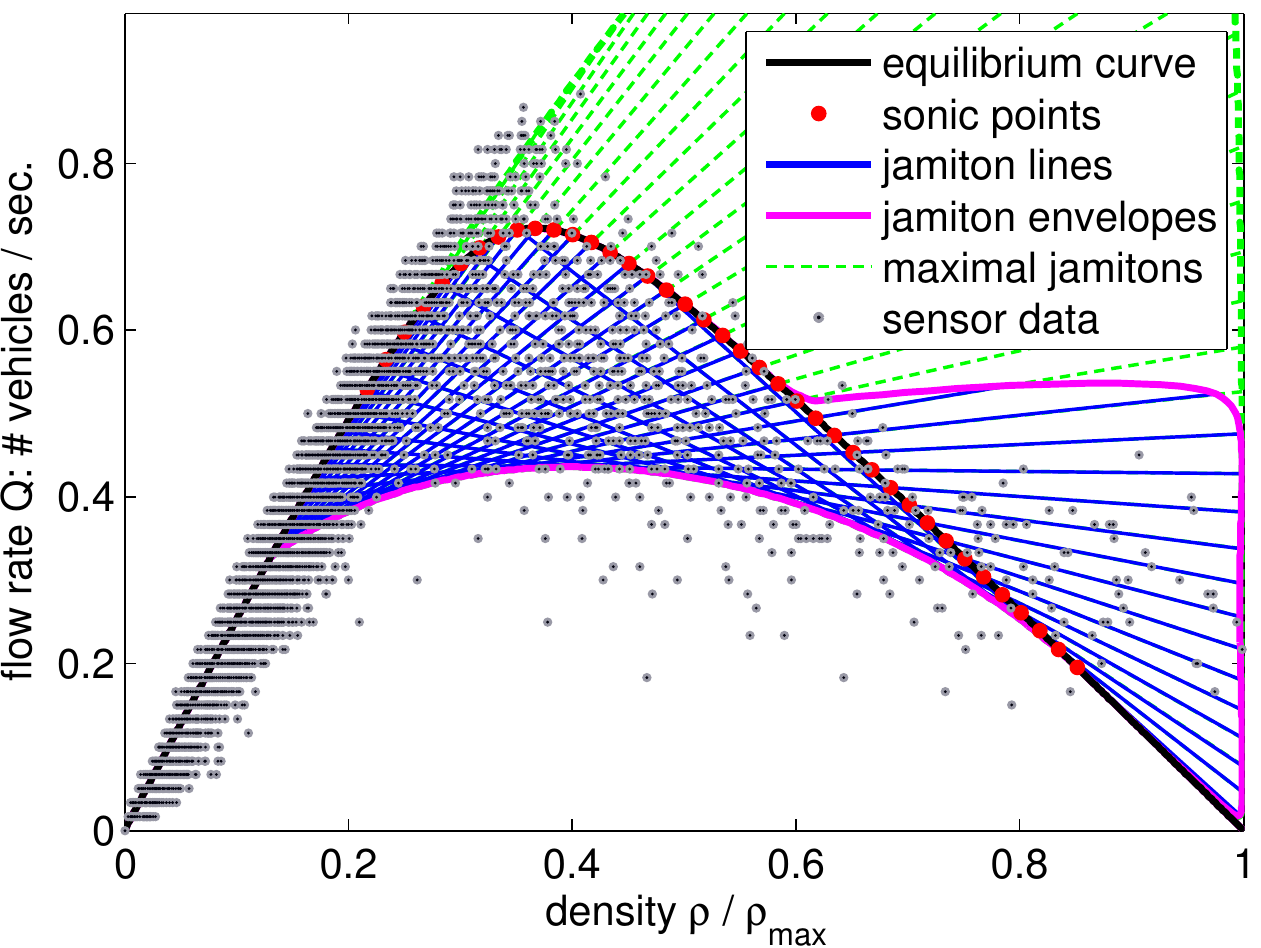}
\vspace{-1.8em}
\caption{Aggregated jamiton FD for the example \textbf{ARZ2} (see Sect.~\ref{subsec:examples}) in comparison with sensor data. \\}
\label{fig:fd_ar_with_data}
\end{minipage}
\end{figure}

\subsection{Mimicking Real Sensor Measurement Data}
\label{sec:mimicking_measurement_data}
The preceding investigations demonstrate that jamitons can serve as an explanation for multi-valued parts of a fundamental diagram. However, in reality a variety of additional effects are likely to contribute to the spread observed in FD data (see Fig.~\ref{fig:fd_data}), such as: variations in vehicle types and drivers, irreproducible behavior, lane switching, etc. In this section we investigate how well the spread in FD data can be reproduced by jamitons alone, while neglecting any other effects. To that end, we attempt to invert the jamiton FD construction: we consider the ARZ model \eqref{eq:aw_rascle_zhang_model} with parameters selected so as to illustrate that a qualitative agreement between the aggregated jamiton FD and the measurement data in Fig.~\ref{fig:fd_data} is possible.

The illustrative, rather than systematic, choice of parameters is due to the following fact. As the constructions in Sects.~\ref{subsec:fundamental_diagram_maximal_jamiton} and~\ref{subsec:emulating_sensor_measurements} indicate, the set-valued region in aggregated jamiton FDs (such as the ones shown in Figs.~\ref{fig:fd_ar_ukink_pbddr_01} and~\ref{fig:fd_ar_ukink_pbddr_10}) depends on both model functions: $U(\rho)$ and $h(\rho)$. Consequently, when observing a set-valued region in sensor measurements, one sees a combined result of both functions, and there is no canonical procedure to obtain them in succession (such as: first determine $U(\rho)$, and after that find $h(\rho)$).

We consider the example \textbf{ARZ2}, as described in Sect.~\ref{subsec:examples}, with the only modification that $u_\text{max} = 19.2\text{m}/\text{s}$. In addition, we choose $\alpha = 6$, i.e., for a data aggregation time of $\Delta t = 30\text{s}$, this corresponds to the drivers' relaxation time of $\tau = 5\text{s}$. While these model parameters are selected with the goal of achieving a qualitative agreement with data, none of them are visibly unreasonable or unrealistic. The results of this model can be seen in Fig.~\ref{fig:fd_ar_with_data}, which shows the aggregated jamiton FD, together with the measurement data. While the agreement is not perfect, many of the most significant features of the data are in fact reproduced by the jamiton FD.

\vspace{1.5em}
\section{Conclusions and Outlook}
\label{sec:conclusions_and_outlook}
We have demonstrated that both the classical Payne-Whitham model and the inhomogeneous Aw-Rascle-Zhang model of traffic flow possess an intrinsic phase transition behavior between stable and jamiton-dominated regimes. This structure gives rise to set-valued fundamental diagrams, and we have provided an almost fully explicit way to construct such set-valued fundamental diagrams from the parameters governing the second order traffic model. The set-valued part of the fundamental diagram represents families of traveling wave solutions, so-called \emph{jamitons}. Various ways to perform averaging on these solutions (thus emulating sensor measurements) lead to different types of envelopes for the set-valued region. Appropriate choices for the model parameter functions and for the type of averaging lead to fundamental diagrams that, at least qualitatively, agree with fundamental diagrams obtained from sensor measurements.

The phase transitions between stable and jamiton-dominated traffic flow are related to the sub-characteristic condition, that was introduced by Whitham in 1959, and the construction of jamiton solutions is based on the Zel'dovich-von~Neumann-D{\"o}ring (ZND) theory of detonations. Through both relations, interesting connections between traffic models and many other physical phenomena can be drawn (see \cite{FlynnKasimovNaveRosalesSeibold2009} for examples).

It is worthwhile to stress the essential difference of the relaxation models considered here with phase transition models (e.g., \cite{Colombo2002, Colombo2003, BlandinWorkGoatinPiccoliBayen2011}). While the latter insert the observed transition from function-valued to set-valued regions in the fundamental diagram explicitly into their models (i.e., different models are used for the different regions), the former consist of a single set of equations that generates the transition from function-valued to set-valued regions in a natural, intrinsic, fashion.

Having a systematic approach for the construction of the envelopes of a set-valued fundamental diagram from a given traffic model could also help in the inverse problem of determining desired velocity functions and traffic pressures from observations and data, as follows. While (at least for congested traffic) these parameter functions cannot easily be measured directly, envelopes of data points (modulo outliers) can certainly be obtained directly from measurements. Thus, the relationship between the fundamental diagram and jamiton solutions established in this paper could serve as means to re-construct parameter functions so that the model reproduces the observed envelopes.

The methodology presented here allows the complete construction of fundamental diagrams from second order traffic models. Yet, certain aspects require a deeper investigation. Most prominently, the dynamic stability of the jamiton solutions is not considered. While numerical studies (e.g., \cite{Greenberg2004, FlynnKasimovNaveRosalesSeibold2009}) suggest that many jamiton solutions are in fact stable attractors, this observation may not generalize to all jamiton solutions. The constructions conducted in Sect.~\ref{subsubsec:jamiton_construction} can, in principle, generate jamitons of arbitrarily short, and of arbitrarily long, wavelengths. Both are most likely dynamically unstable: a chain of very short jamitons is essentially a uniform density state plus some small sawtooth perturbation; and a very long jamiton possesses a very long ``tail'' of almost constant density. In both cases, if the corresponding density is itself unstable, small perturbations will amplify and thus destroy the structure. While it remains a topic of future research to analyze the structural stability of jamiton solutions, it can certainly be concluded that the removal of long wavelengths might result in interesting modifications of the resulting fundamental diagrams.

Numerical simulations can be employed to study the dynamic stability of jamiton solutions. In this fashion, fundamental diagrams can be systematically generated through numerical simulations (as conducted for instance in \cite{SiebelMauser2006}). Therefore, numerical studies could provide a missing link between fundamental diagrams obtained from sensor measurements and those constructed via jamiton solutions.

One limitation of this study is the restriction to the Payne-Whitham model and the Aw-Rascle-Zhang model only. Many more general types of second order traffic models do not fall into either of these two categories, in particular models that are partially based on measurement data (e.g., generalized Aw-Rascle-Zhang models \cite{FanHertySeibold2012}). In a companion paper \cite{KasimovRosalesSeiboldFlynn2012}, the results on the existence of jamitons are extended to a more general class of second order traffic models that contain both the PW pressure and the ARZ hesitation function at the same time.

\section*{Acknowledgments}
The authors would like to thank J.-C.\ Nave for helpful discussions. The authors would like to acknowledge the support by the National Science Foundation. R. R. Rosales and B. Seibold were supported through grants DMS--1007899 and DMS--1007967, respectively. In addition, R. R. Rosales wishes to acknowledge partial support by the NSF through grants DMS--0813648, DMS--1115278, and DMS--0907955, B. Seibold through grants DMS--0813648 and DMS--1115269, and A. R. Kasimov through grant DMS--0907955. M. R. Flynn wishes to acknowledge support by the NSERC Discovery Grant Program.

\bibliographystyle{plain}
\bibliography{references_complete}

\end{document}